\documentclass{article}

\usepackage[english]{babel}

\usepackage[letterpaper,top=2cm,bottom=2cm,left=3cm,right=3cm,marginparwidth=1.75cm]{geometry}

\usepackage{amsmath}
\usepackage{amssymb}
\usepackage{graphicx}
\usepackage[colorlinks=true, allcolors=blue]{hyperref}
\usepackage{xcolor}
\newtheorem{definition}{Definition}
\newtheorem{proposition}{Proposition}
\newtheorem{theorem}{Theorem}
\newtheorem{assumption}{Assumption}
\newtheorem{lemma}{Lemma}
\newtheorem{remark}{Remark}
\newtheorem{fact}{Fact}
\newtheorem{example}{Example}

\newenvironment{proof}[1][\!]{\noindent {\em Proof #1. }}{\hfill $\blacksquare$ \vskip 3pt}

\newcommand{\be}[1]{\begin{equation}\label{#1}}
\newcommand{\ee}{\end{equation}}

\title{Robust output stability and input-to-output stability \newline based on output dissipation}
\author{Antoine Chaillet\footnote{{\tt antoine.chaillet@centralesupelec.fr}, Universit\'e Paris Saclay, CNRS, CentraleSup\'elec, Laboratoire des signaux et syst\`emes, 91190 Gif sur Yvette, France}, Iasson Karafyllis\footnote{{\tt iasonkar@central.ntua.gr}, Mathematics Department, National Technical University of Athens, Greece}, Yuan Wang\footnote{{\tt ywang@fau.edu}, Department of Mathematics and Statistics, Florida Atlantic University, Boca Raton, FL 33431, USA}}


\usepackage{graphicx}          

\usepackage{amsmath}
\usepackage{amssymb}
\usepackage{graphicx}
\usepackage[colorlinks=true, allcolors=blue]{hyperref}
\usepackage{xcolor}



\begin{document}
\newcommand\mR{\mathbb{R}}
\newcommand\R{\mR}
\newcommand\mRp{\mathbb{R}_{\geq 0}}
\def\cL{L^\infty_{\textrm{loc}}}
\def\cK{\mathcal K}
\def\cKL{\mathcal{KL}}
\newcommand{\comment}[1]{}







\maketitle

\emph{Keywords:} Output stability, input-to-output stability, Barab\u{a}lat's lemma, LaSalle theorem.              

\vspace{3mm}
\textbf{Abstract.} LaSalle techniques to ensure the convergence of a given output usually fail at guaranteeing uniform convergence time, which induces robustness issues. Recent works have provided extra conditions under which a Lyapunov function that dissipates in terms of only the output guarantees this uniformity. In this paper, we extend these results to systems with inputs in order to establish either robust stability or input-to-output stability. In addition, we show that a recent relaxation of Barb\u{a}lat's lemma is also applicable to systems with inputs. The 
significance of the proposed results is demonstrated in the context of a recent adaptive control scheme.

\section{Introduction}

In a wide range of applications, including adaptive control, observer design, or 
regulation problems, 
the objective is not to control the full state, but only some state variables of interest. For instance, in adaptive control, one usually tries to impose that the plant state converges to the target, whereas the parameter estimation error is not requested to go to zero. In observer design, the relevant state variable is the observation error between the actual state and the reconstructed one. In regulation problems, a primary objective is to guarantee that the tracking error converges to zero.
%
More generally, one often wants that a particular output of interest converges to zero, regardless of the overall behavior of the state. In addition to the convergence of the relevant output, it is often desirable to impose the property of small output transients when the initial state is small. Several tools exist in the literature to address this analysis, including partial stability \cite{VORbook}, stability with respect to two measures  \cite{Movchan60,LAKLEE,TEEPRAconverse}, and output stability \cite{KAJIbook11,SONWANIOS}.

An additional feature, that may look purely technical at first sight, is the uniformity of the convergence rate, meaning that, there is a common bound on the time required for the output to enter a prescribed neighborhood of the origin, uniformly over all initial states and inputs from a bounded set. It is known that this uniformity comes for free for finite-dimensional systems when the considered output is the entire state \cite[Theorem 2]{SONWANTAC}. On the contrary, in the context of output stability, this uniformity is no longer automatic and a system can very well be output stable, with all outputs converging to the origin, without exhibiting a uniform convergence rate over bounded sets of initial states and inputs \cite{ORCHSI20}.

This lack of uniformity may be problematic for two reasons. First, it allows for possible arbitrarily slow transients even when initial states and inputs are in a given prescribed ball. Second, it poses robustness issues, as this uniformity is required in converse Lyapunov techniques for output stability \cite{SONWANIOS-LYA}.

A typical situation in which this uniformity is not guaranteed is when the convergence is established using LaSalle or Barb\u{a}lat techniques. This typically arises when the considered Lyapunov function's dissipation rate involves the output variables only. The example in \cite{ORCHSI20} shows that such an output dissipation rate is not enough to guarantee uniform convergence of the output. Nevertheless, it has been shown in \cite{KACH20} that, if the output dissipation rate happens to be non-increasing along the system's solution, then the sought uniformity does hold.

However, this result is only available in the absence of inputs. The objective of this note is to extend it to systems with inputs, in order to guarantee both uniform robust output stability and Input-to-Output Stability (IOS), as studied in \cite{SONWANIOS,SONWANIOS-LYA,KAJIbook11}. In the first case, we require that disturbances have no qualitative impact on the output transients and steady-state behavior. In the second case, similar to Input-to-State Stability (ISS, \cite{cetraro,MironchenkoBook}), IOS imposes a uniform convergence to the origin, up to a steady-state error that is ``proportional'' to the input magnitude, with the crucial difference that IOS imposes this only on the considered output.

In the spirit of \cite{KACH20}, we show in Section \ref{sec-URGAOS} that, if the Lyapunov function admits a dissipation rate that involves the output only and if this dissipation rate happens to decrease along the system's solutions, then uniform robust global asymptotic output stability is guaranteed. The conditions we provide here are actually less restrictive than \cite{KACH20} even in the disturbance-free case. Similarly, in Section \ref{sec-IOS} we show that if the considered Lyapunov function admits an output dissipation, then the IOS property holds provided that this dissipation rate does not increase along solutions when the output is large compared to the input. 
We provide an academic example that shows the advantage of the method compared to existing Lyapunov conditions for IOS (which requires a dissipation rate involving the whole Lyapunov function). 

In the case where no uniformity requirement is imposed, \cite{KACH20} provided a Barb\u{a}lat condition to ensure output asymptotic stability. More precisely, it is shown there that the uniform continuity assumption in Barb\u{a}lat's lemma can be relaxed to a weaker property, thus leading to less demanding requirements for output stability in the absence of inputs. We show in Section \ref{sec-Barbalat} that a similar result can be derived for systems with disturbances to ensure robust global asymptotic output stability.

Finally, in Section \ref{sec-adaptive}, we show how our findings prove useful in the specific context of adaptive control. To that end, we rely on the adaptive control strategy recently developed in the book \cite{KAKRbook25}. This control strategy, called Deadzone-Adapted Disturbance Suppression (DADS), guarantees rejection of the steady-state effect of disturbances using deadzone and nonlinear damping. On a specific illustrative system, we show that our results enlarge the parameter range in which uniform asymptotic output stability is guaranteed, and we demonstrate the IOS property for a set of parameter values.

\vspace{3mm}
\textbf{Notation.} Given a set $D\subseteq \mR^m$, $L^\infty(\mRp,D)$ (resp. $L^\infty_{loc}(\mRp,D)$) denotes the set of all measurable and essentially bounded (resp. locally essentially bounded) functions $d:\mRp\to D$. We use $|\cdot|$ to denote the Euclidean norm of a vector and $\|\cdot\|_\infty$ for the $L^\infty$-norm of a signal (given $d\in L^\infty_{loc}(\mRp,D)$, $\|d\|_\infty=\textrm{ess\,sup}_{t\geq 0}|d(t)|$). The class $\mathcal{K}$ denotes the set of all continuous increasing functions $\alpha:\mRp\to\mRp$ satisfying $\alpha(0)=0$. $\alpha\in\cK_\infty$ if $\alpha\in\cK$ and $\lim_{s\to+\infty}\alpha(s)=+\infty$. A function $\beta:\mRp\to\mRp$ is of class $\cKL$ if, $\beta(\cdot,t)\in\cK$ for each fixed $t\geq 0$ and, for each fixed $s\geq 0$, $\beta(s,\cdot)$ is continuous non-decreasing and tends to zero as its argument tends to infinity. The gradient of a continuously differentiable function $V:\mR^n\to\mR$ is denoted by $\nabla V$. The rectification function is denoted by $(\cdot)^+$:  for any $s\in\mR$, $\left(s\right)^{+} :=\max \left\{0,s\right\}$. 

\section{Robust stability}\label{sec2}

\noindent Let $D\subseteq {\mathbb R}^{m} $ be a non-empty set and $f:{\mathbb R}^{n} \times D\to {\mathbb R}^{n} $ be a continuous mapping which is locally Lipschitz in its first variable, uniformly in the second variable on bounded set of $D$
, in the sense that, for every compact set $S\subseteq {\mathbb R}^{n}$ and for every bounded set $K\subseteq D$ there exists $L_{S,K}>0$ such that, for all $x,z\in S$ and all $d \in K$,
\[
\left|f(x,d)-f(z,d)\right|\le L_{S,K}\left|x-z\right|. 
\]
In this section, we assume that $f(0,d)=0$ for all $d\in D$, meaning that the origin is an equilibrium regardless of the applied disturbance $d$. Given a continuous mapping $h:{\mathbb R}^{n} \to {\mathbb R}^{k} $ with $h(0)=0$, consider the system
\begin{equation} \label{GrindEQ__1_} 
\dot{x}=f(x,d) 
\end{equation} 
with input $d\in L_{loc}^{\infty } \left(\mRp ;D\right)$ and output 
\begin{equation} \label{GrindEQ__2_} 
y=h(x).
\end{equation} 
Throughout this paper, we assume that system \eqref{GrindEQ__1_} is forward complete, i.e., for every $x_{0} \in {\mathbb R}^{n} $ and $d\in L_{loc}^{\infty } \left(\mRp ;D\right)$, the unique solution $x(t)=\phi (t,x_{0} ;d)$ of the initial-value problem \eqref{GrindEQ__1_} with initial condition $x(0)=x_{0}$ and input $d$ exists for all $t\ge 0$. We also use the notation $y(t,x_{0} ;d):=h(\phi (t,x_{0} ;d))$ for all $t\ge 0$.

The output defined in \eqref{GrindEQ__2_} should be seen as a subset of state variables, or a given quantity of interest, for which we want to impose a specific behavior regardless of the evolution of the other state variable. We recall below different output stability notions that capture this.

\begin{definition}[Output stability/attractivity] \label{def-1} 
We say that system \eqref{GrindEQ__1_}-\eqref{GrindEQ__2_}  is
\begin{itemize}
\item\textit{Robustly Globally Output Attractive (RGOA)} if, for every $x_{0} \in {\mathbb R}^{n} $  and every $d\in L_{loc}^{\infty } \left(\mRp ;D\right)$, it holds that $\lim_{t\to +\infty } y(t,x_{0} ;d)=0$.

\item \textit{Uniformly Robustly Globally Output Attractive (URGOA)} if, for every $\varepsilon ,R>0$, there exists $T(\varepsilon ,R)>0$ such that, for all $d\in L_{loc}^{\infty } \left(\mRp ;D\right)$ and all $x_{0} \in {\mathbb R}^{n}$ with $\left|x_{0} \right|\le R$, it holds that $\left|y(t,x_{0} ;d)\right|\le \varepsilon $ for all $t\ge T(\varepsilon ,R)$.

\item \textit{Robustly Lyapunov output stable} if, for every $\varepsilon >0$, there exists $\delta (\varepsilon )>0$ such that, for all $d\in L_{loc}^{\infty } \left(\mRp ;D\right)$ and all $x_{0} \in {\mathbb R}^{n} $ with $\left|x_{0} \right|\le \delta \left(\varepsilon \right)$, it holds that $\left|y(t,x_{0} ;d)\right|\le \varepsilon $ for all $t\ge 0$.

\item \textit{Robustly Lagrange output stable} if, for every $\delta >0$, there exists $\varepsilon(\delta) >0$ such that, for all $d\in L_{loc}^{\infty } \left(\mRp ;D\right)$ and all $x_{0} \in {\mathbb R}^{n} $ with $\left|x_{0} \right|\le \delta $, it holds that $\left|y(t,x_{0} ;d)\right|\le \varepsilon(\delta)$ for all $t\ge 0$.

\item \textit{Robustly Globally Asymptotically Output Stable (RGAOS)} if it is robustly Lagrange output stable, robustly Lyapunov output stable and RGOA.

\item \textit{Uniformly Robustly Globally Asymptotically Output Stable (URGAOS)} if it is robustly Lagrange output stable, robustly Lyapunov output stable and URGOA.
\end{itemize}
If no inputs are present, i.e., if $D$ is a singleton, then all above notions are used without the word ``robustly''. 
\end{definition}


RGOA simply means that any solution generates an output that asymptotically vanishes. URGOA additionally imposes that the convergence rate of the output to zero is uniform over bounded sets of initial states (and over all considered inputs). This uniformity precludes the possibility to have arbitrarily slow output transients when initial states are constrained to a prescribed bounded set, which constitutes an interesting feature in practice. Lyapunov and Lagrange stability are used in their classical sense, with the notable peculiarity that they concern the output only (and not necessarily the full state) and that the bounds on the output are uniform over all possible inputs. Finally, both RGAOS and URGAOS combine Lyapunov and Lagrange output stability and the convergence of the output to zero, with the additional requirement for URGAOS that this convergence is uniform over bounded sets of initial states.

Using \cite[Lemma 15]{albertini1999continuous} or Lemma 2.2, p.\,60, and Lemma 2.6, p.\,67, in \cite{KAJIbook11}, URGAOS can be equivalently stated as a $\cKL$ estimate (the reader should notice that Lemmas 2.2 and 2.6 in \cite{KAJIbook11} make no use of robust forward completeness, which is not assumed in Definition \ref{def-1}). 

\begin{proposition}[$\cKL$ estimate for URGAOS] The system \eqref{GrindEQ__1_}-\eqref{GrindEQ__2_} is URGAOS if and only if there exists a function $\beta \in \cKL$ such that, for all $x_{0} \in {\mathbb R}^{n} $ and all $d\in\cL(\mRp,D)$,
\begin{equation} \label{KL1} 
\left|y(t,x_{0} ;d)\right|\le \beta \left(\left|x_{0} \right|,t\right),\quad \forall t\geq 0.
\end{equation} 
\end{proposition}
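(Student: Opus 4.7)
The plan is to prove the equivalence in two directions.

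\emph{$\cKL$ estimate $\Rightarrow$ URGAOS.} Assuming the bound \eqref{KL1}, I would read off the three defining properties of URGAOS directly from the properties of $\beta$. Robust Lyapunov output stability holds with $\delta(\varepsilon)>0$ chosen so that $\beta(\delta(\varepsilon),0)\le\varepsilon$, which is possible since $\beta(\cdot,0)\in\cK$; robust Lagrange output stability holds with $\varepsilon(\delta):=\beta(\delta,0)$; URGOA holds by picking $T(\varepsilon,R)>0$ with $\beta(R,T(\varepsilon,R))\le\varepsilon$, which is feasible since $\beta(R,\cdot)$ vanishes at infinity. The uniformity in $d$ of each of these estimates is inherited from the uniformity of \eqref{KL1}.

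\emph{URGAOS $\Rightarrow$ $\cKL$ estimate.} This direction is the standard $\cKL$ characterization of global asymptotic stability transposed to the output setting, and is exactly the content of \cite[Lemma 15]{albertini1999continuous} (and also \cite[Lemmas 2.2, 2.6]{KAJIbook11}); I would simply invoke it. The underlying argument proceeds by first combining robust Lyapunov and Lagrange output stability into a $\cK_\infty$ majorant $\sigma$ of the quantity $\sup_{t\ge 0,\,d}|y(t,x_{0};d)|$ as a function of $|x_0|$ (finiteness comes from Lagrange stability, vanishing at zero from Lyapunov stability), and then combining this with the uniform convergence time $T(\varepsilon,R)$ furnished by URGOA through the envelope
\[
\psi(R,t):=\sup\bigl\{|y(s,x_{0};d)|\,:\,s\ge t,\ |x_{0}|\le R,\ d\in L^\infty_{loc}(\mRp,D)\bigr\},
\]
which is non-decreasing in $R$, non-increasing in $t$, bounded above by $\sigma(R)$, and satisfies $\psi(R,t)\to 0$ as $t\to\infty$ for every $R>0$.

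The main obstacle is the final regularization step: converting the monotone but \emph{a priori} discontinuous envelope $\psi$ into a genuine $\cKL$ majorant $\beta$. This is the classical smoothing/majorization construction provided by the cited lemmas, which I would rely on rather than reprove. The parenthetical remark preceding the proposition is what ensures applicability in our setting, since the sole hypothesis truly needed is the uniformity guaranteed by URGAOS (together with forward completeness) rather than any robust forward completeness of the full state trajectory.
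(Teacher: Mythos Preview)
Your proposal is correct and matches the paper's approach: the paper does not give an independent proof but simply invokes \cite[Lemma 15]{albertini1999continuous} and \cite[Lemmas 2.2, 2.6]{KAJIbook11}, exactly as you do, with the same parenthetical remark that robust forward completeness is not needed. Your additional sketch of both directions is accurate and more detailed than what the paper provides.
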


\comment{
Some of the results presented here slightly simplify when additional regularity (in the input) is assumed on the vector field, namely when the following assumption holds.
}

Some of the results presented here can be slightly simplified under additional regularity assumptions on $f$ with respect to the input.  For results where this assumption is required, it will be explicitly indicated.

\begin{assumption}[Extra condition on the vector field]\label{ass-1} The set $D\subseteq {\mathbb R}^{m} $ is compact and the mapping $f:{\mathbb R}^{n} \times D\to {\mathbb R}^{n} $ is locally Lipschitz, i.e., for every compact set $S\subseteq {\mathbb R}^{n} $ there exists $L_S>0$ such that, for all $x,z\in S$ and all $d, e \in D$,
\[\left|f(x,d)-f(z,e)\right|\le L_S\left(\left|x-z\right| + \left|d-e\right|\right). \] 
\end{assumption}

Under this assumption, \cite{ANGSON-UFC} guarantees that forward completeness of \eqref{GrindEQ__1_}-\eqref{GrindEQ__2_} ensures robust forward completeness, in the sense that, starting from any bounded set of initial conditions and for arbitrary inputs, solutions can only explore a bounded region of the state space over any bounded time interval. In this case, by using results from \cite{KAJIbook11}  (namely Lemma 2.1, p.\,58, and Lemma 2.2, p.\,60), it can be seen that URGAOS boils down to URGOA (both Lyapunov and Lagrange output stability come for free).

\begin{proposition}[When URGOA $\Rightarrow$ URGOAS, \cite{KAJIbook11}]\label{prop-2} Under Assumption \ref{ass-1}, system \eqref{GrindEQ__1_}-\eqref{GrindEQ__2_} is URGAOS if and only if it is URGOA.
\end{proposition}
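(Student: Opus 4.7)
The forward implication is immediate: by Definition \ref{def-1}, URGAOS explicitly includes URGOA as one of its three constituent properties. Only the converse implication, namely URGOA $\Rightarrow$ URGAOS under Assumption \ref{ass-1}, requires work. To establish it, I need to show that URGOA together with Assumption \ref{ass-1} forces both robust Lagrange output stability and robust Lyapunov output stability.

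The key preliminary step is to invoke Assumption \ref{ass-1} together with the main result of \cite{ANGSON-UFC} to promote forward completeness into \emph{robust} forward completeness: for every $R,T>0$ there exists $M(R,T)>0$ such that $|\phi(t,x_{0};d)|\le M(R,T)$ whenever $|x_{0}|\le R$, $t\in[0,T]$ and $d\in L_{loc}^{\infty}(\mRp,D)$. Continuity of $h$ together with $h(0)=0$ then turns this state bound into a uniform-in-$d$ bound on $|y(t,x_{0};d)|$ over bounded time windows.

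For robust Lagrange output stability, fix $\delta>0$, apply URGOA with $\varepsilon=1$ to get a time $T(1,\delta)$ beyond which $|y(t,x_{0};d)|\le 1$ for all $|x_{0}|\le\delta$ and all $d$, and control $|y(t,x_{0};d)|$ on $[0,T(1,\delta)]$ by robust forward completeness plus continuity of $h$; the bound $\varepsilon(\delta)$ is then the maximum of these two quantities. For robust Lyapunov output stability, fix $\varepsilon>0$ and apply URGOA with $R=1$ to obtain $T(\varepsilon,1)$ such that $|y(t,x_{0};d)|\le\varepsilon$ whenever $|x_{0}|\le 1$ and $t\ge T(\varepsilon,1)$. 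On the remaining window $[0,T(\varepsilon,1)]$ I exploit continuous dependence on initial conditions: since $f(0,d)=0$, the origin is a solution for every $d$, and a standard Gronwall argument with a Lipschitz constant furnished by Assumption \ref{ass-1} (uniform in $d\in D$) yields $|\phi(t,x_{0};d)|\le|x_{0}|e^{LT(\varepsilon,1)}$ on that interval. Continuity of $h$ at the origin then provides $\delta(\varepsilon)>0$ ensuring $|y(t,x_{0};d)|\le\varepsilon$ for $|x_{0}|\le\delta(\varepsilon)$ and $t\in[0,T(\varepsilon,1)]$, which joins seamlessly with the URGOA tail estimate.

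The main delicate point is ensuring that the Gronwall estimate and the finite-horizon boundedness are \emph{uniform} over all admissible $d\in L_{loc}^{\infty}(\mRp,D)$; without this uniformity, arbitrarily small initial states could be driven far from the origin by a cunning choice of $d$, breaking both stability notions. This is exactly what Assumption \ref{ass-1} buys: compactness of $D$ together with a joint Lipschitz bound in $(x,d)$ produces a Lipschitz constant in $x$ that does not depend on $d$, and \cite{ANGSON-UFC} supplies the corresponding uniform-in-$d$ forward invariance of bounded sets over bounded time horizons. Alternatively, the entire argument can be repackaged by directly invoking Lemma 2.1 (p.\,58) and Lemma 2.2 (p.\,60) of \cite{KAJIbook11}, which codify precisely this passage from attractivity to asymptotic stability under robust forward completeness.
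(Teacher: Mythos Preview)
Your argument is correct and matches the paper's approach: the paper does not give a self-contained proof but simply observes that Assumption \ref{ass-1} together with \cite{ANGSON-UFC} yields robust forward completeness, after which Lemmas 2.1 and 2.2 of \cite{KAJIbook11} supply Lyapunov and Lagrange output stability from URGOA. Your proposal is essentially an explicit unpacking of that citation chain, and your final paragraph acknowledges this equivalence directly.

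One minor remark on presentation: in your Gronwall step for Lyapunov stability you write $|\phi(t,x_{0};d)|\le |x_{0}|e^{LT(\varepsilon,1)}$, but the Lipschitz constant $L$ is only valid on a compact set, so you should first invoke robust forward completeness (with $R=1$ and $T=T(\varepsilon,1)$) to confine all relevant trajectories to some ball of radius $M$, and then take $L=L_{S}$ with $S=\{|x|\le M\}$. You have all the ingredients for this, but the order in which they are applied should be made explicit to avoid the appearance of circularity.
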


It should be noted that most of the results that are given below do not require the validity of Assumption \ref{ass-1}.

A way to guarantee robust Lagrange output stability is through the following result.

\begin{proposition}[Condition for Lagrange output stability]\label{prop-Lagrange} If there exist functions $Q\in C^{1} \left({\mathbb R}^{n} ;\mRp \right)$ and $a\in \cK_{\infty }$ such that, for all $x\in {\mathbb R}^{n}$ and all $d\in D$,
\begin{align} 
a\left(\left|h(x)\right|\right)\le Q(x) \label{GrindEQ__4_} \\
\nabla Q(x)f(x,d)\le 0,\label{GrindEQ__3_} 
\end{align} 
then the system \eqref{GrindEQ__1_}-\eqref{GrindEQ__2_} is robustly Lagrange output stable. 
\end{proposition}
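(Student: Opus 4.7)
The approach is a direct Lyapunov-style argument using $Q$ as a (non-strict) storage function. Fix an arbitrary $\delta > 0$. Since $Q$ is continuous and the closed ball $\{x \in \mathbb{R}^n : |x| \leq \delta\}$ is compact, the quantity
\[
M(\delta) := \max_{|x|\leq \delta} Q(x)
\]
is finite and depends only on $\delta$ (not on the disturbance). I would then show that $\varepsilon(\delta) := a^{-1}(M(\delta))$ does the job required by Definition \ref{def-1}; this is well defined because $a \in \mathcal{K}_\infty$ is a homeomorphism of $\mathbb{R}_{\geq 0}$.

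Fix $x_0$ with $|x_0| \leq \delta$ and any $d \in L^\infty_{loc}(\mathbb{R}_{\geq 0}; D)$. The trajectory $\phi(\cdot, x_0; d)$ is absolutely continuous and satisfies $\dot{\phi}(t) = f(\phi(t), d(t))$ for almost every $t \geq 0$. Since $Q \in C^1$, the composition $t \mapsto Q(\phi(t, x_0; d))$ is absolutely continuous, and by the chain rule
\[
\frac{d}{dt} Q(\phi(t, x_0; d)) = \nabla Q(\phi(t, x_0; d))\, f(\phi(t, x_0; d), d(t)) \leq 0
\]
for almost every $t \geq 0$, thanks to \eqref{GrindEQ__3_}. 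Integrating yields $Q(\phi(t, x_0; d)) \leq Q(x_0) \leq M(\delta)$ for all $t \geq 0$.

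Combining this with the lower bound \eqref{GrindEQ__4_} gives $a(|y(t, x_0; d)|) = a(|h(\phi(t, x_0; d))|) \leq Q(\phi(t, x_0; d)) \leq M(\delta)$, so that
\[
|y(t, x_0; d)| \leq a^{-1}(M(\delta)) = \varepsilon(\delta), \qquad \forall t \geq 0.
\]
Since this bound is uniform in $d$ and in $x_0$ within the ball of radius $\delta$, robust Lagrange output stability follows.

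There is no real obstacle: the only points requiring a little care are (i) the finiteness of $M(\delta)$, which needs continuity of $Q$ on a compact set, (ii) invoking absolute continuity rather than classical differentiability when taking $\tfrac{d}{dt} Q(\phi(\cdot))$ because $d$ is only measurable, and (iii) noting that $a \in \mathcal{K}_\infty$ guarantees $a^{-1}$ exists globally on $\mathbb{R}_{\geq 0}$. None of these require assumptions beyond what is stated.
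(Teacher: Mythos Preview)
Your argument is correct and is precisely the standard direct proof one would expect; the paper itself does not supply a proof for this proposition, treating it as an elementary consequence of \eqref{GrindEQ__4_}--\eqref{GrindEQ__3_}, so there is nothing to compare beyond noting that your reasoning is the natural one.
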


Note that these conditions are imposed to hold for all $x$, but $Q(x)$ is not requested to vanish for $x=0$. Similar conditions imposed locally and with a function that vanishes at zero lead to robust Lyapunov output stability.

\begin{proposition}[Condition for Lyapunov output stability]\label{prop3} If there exist a constant $r>0$, functions $W\in C^{1} \left({\mathbb R}^{n} ;\mRp \right)$ with $W(0)=0$ and $a\in \cK$ such that, for all $x\in {\mathbb R}^{n} $ with $W(x)<r$ and all $d\in D$,
\begin{align} 
a\left(\left|h(x)\right|\right)\le W(x) \label{GrindEQ__5_} \\
\nabla W(x)f(x,d)\le 0,\label{GrindEQ__6_} 
\end{align} 
then \eqref{GrindEQ__1_}-\eqref{GrindEQ__2_} is robustly Lyapunov output stable. 
\end{proposition}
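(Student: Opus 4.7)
The plan is to emulate the classical Lyapunov argument, but adapted to the fact that the bounds on $W$ and its derivative are only assumed to hold on the sublevel region $\{W(x)<r\}$, and that $W$ controls only the output $h(x)$, not the state.

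Fix $\varepsilon>0$. Without loss of generality we may assume that $\varepsilon$ is small enough that $a(\varepsilon)<r$ (if the conclusion holds for some $\varepsilon'>0$, it automatically holds for every $\varepsilon\ge \varepsilon'$ with the same $\delta$). Since $W$ is continuous and $W(0)=0$, there exists $\delta=\delta(\varepsilon)>0$ such that $|x_0|\le\delta$ implies $W(x_0)<a(\varepsilon)$. Fix any such $x_0$ and any $d\in L^\infty_{loc}(\mRp;D)$, and write $x(t):=\phi(t,x_0;d)$.

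The core step is to show that $W(x(t))<r$ for all $t\ge 0$, which is what allows us to invoke \eqref{GrindEQ__5_}--\eqref{GrindEQ__6_} along the trajectory. Suppose by contradiction that the set $\{t\ge 0:W(x(t))\ge r\}$ is non-empty and let $t^\star$ be its infimum. By continuity of $t\mapsto W(x(t))$ and since $W(x(0))<a(\varepsilon)<r$, we have $t^\star>0$ and $W(x(t^\star))=r$, while $W(x(t))<r$ for all $t\in[0,t^\star)$. On this interval, \eqref{GrindEQ__6_} together with the chain rule gives $\frac{d}{dt}W(x(t))=\nabla W(x(t))f(x(t),d(t))\le 0$ for almost every $t$, hence $W(x(t))\le W(x_0)<a(\varepsilon)$ on $[0,t^\star)$. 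Passing to the limit $t\to t^\star$ by continuity yields $W(x(t^\star))\le W(x_0)<a(\varepsilon)<r$, contradicting $W(x(t^\star))=r$.

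Consequently $W(x(t))<r$ for every $t\ge 0$, so the dissipation inequality \eqref{GrindEQ__6_} applies on all of $\mRp$ and we conclude $W(x(t))\le W(x_0)<a(\varepsilon)$ for all $t\ge 0$. Applying \eqref{GrindEQ__5_} along the trajectory then gives $a(|h(x(t))|)\le W(x(t))<a(\varepsilon)$, and since $a\in\cK$ is strictly increasing this implies $|y(t,x_0;d)|=|h(x(t))|<\varepsilon$ for all $t\ge 0$. Since $\delta$ depends only on $\varepsilon$ (not on $x_0$ or $d$), this establishes robust Lyapunov output stability.

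The main obstacle is the barrier argument in the second paragraph: because the dissipation inequality is only assumed on the open region $\{W<r\}$, one must rule out an escape through the boundary before being allowed to use the inequality globally in time. All other steps are routine continuity and monotonicity manipulations.
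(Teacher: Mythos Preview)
The paper does not supply its own proof of this proposition; it is stated as a preliminary result and left unproved. Your argument is correct and is precisely the standard one: use continuity of $W$ at the origin to pick $\delta$, run a barrier/first-exit-time argument to keep the trajectory inside $\{W<r\}$, and then read off the output bound from \eqref{GrindEQ__5_}. All steps are sound, including the reduction to $a(\varepsilon)<r$ and the use of absolute continuity of $t\mapsto W(x(t))$ to integrate \eqref{GrindEQ__6_}.
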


\subsection{Robust UGAOS}\label{sec-URGAOS}

As shown by the counter-example in \cite{ORCHSI20}, uniform output stability notions cannot be ensured in general by a Lyapunov function that dissipates in terms of the output only. Yet, it has been shown in \cite{KACH20} that URGOAS does hold if, in addition, this dissipation does not increase along the system's solutions. The following statement relaxes this assumption by imposing this extra condition only when the dissipation term is small and generalizes that result by allowing systems with inputs.

\begin{theorem}[Conditions for URGOA and URGAOS]\label{thm_1}Suppose that there exist $V,W\in C^{1} \left({\mathbb R}^{n} ;\mRp \right)$ and a continuous positive definite function $\rho :\mRp \to \mRp $ such that, for all $x\in {\mathbb R}^{n} $ and all $d\in D$,
\begin{align} 
\nabla V(x)f(x,d)\le -\rho \left(W(x)\right). \label{GrindEQ__7_} 
\end{align}
Assume further that, for some constant $r>0$ and some function $a\in \cK$, it holds for all $x\in {\mathbb R}^{n}$ with $W(x)<r$ and all $d\in D$ that
\begin{align}
a\left(\left|h(x)\right|\right)\le W(x)  \label{GrindEQ__5_bis} \\
\nabla W(x)f(x,d)\le 0. \label{GrindEQ__6_bis} 
\end{align}
Then, under any of the two following conditions:
\begin{itemize}
    \item [$i)$] $\liminf_{s\to +\infty } \,\rho (s)>0$,
    \item[$ii)$] there exist $\zeta \in \cK_{\infty } $ and $Q\in C^{1} \left({\mathbb R}^{n} ;\mRp \right)$ such that, for all $x\in {\mathbb R}^{n} $ and all $d\in D$,
\begin{align} \label{GrindEQ__8_} 
W(x)\le \zeta \left(Q(x)\right)\\
\nabla Q(x)f(x,d)\le 0,\label{GrindEQ__3_bis} 
\end{align} 
\end{itemize}
the system \eqref{GrindEQ__1_}-\eqref{GrindEQ__2_}  is URGOA and robustly Lyapunov stable. In particular, if Assumption \ref{ass-1} holds, then the system \eqref{GrindEQ__1_}-\eqref{GrindEQ__2_} is URGAOS.
\end{theorem}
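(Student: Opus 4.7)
The approach is to separate the three claims. Robust Lyapunov output stability follows directly from Proposition \ref{prop3}, whose hypotheses are exactly the local conditions \eqref{GrindEQ__5_bis}--\eqref{GrindEQ__6_bis}. The URGAOS conclusion under Assumption \ref{ass-1} then reduces, by Proposition \ref{prop-2}, to URGOA. Hence the substantive step is to prove URGOA under each of the two alternative conditions $(i)$ and $(ii)$.

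For URGOA, I would combine two structural facts. First, integrating \eqref{GrindEQ__7_} along a solution $\phi(\cdot,x_0;d)$ yields, for every $R>0$ and every $|x_0|\le R$,
\[
\int_{0}^{t}\rho\bigl(W(\phi(s,x_0;d))\bigr)\, ds \;\le\; V(x_0)-V(\phi(t,x_0;d)) \;\le\; C_R,
\]
where $C_R:=\sup_{|x|\le R}V(x)$ is finite by continuity of $V$ and is independent of $d$. Second, by \eqref{GrindEQ__6_bis}, once $W(\phi(\cdot,x_0;d))$ drops below $r$ it stays below $r$ and is non-increasing thereafter. Together with \eqref{GrindEQ__5_bis}, it thus suffices to exhibit a time $T(\varepsilon,R)$, uniform in $|x_0|\le R$ and in $d$, after which $W(\phi(t,x_0;d))\le a(\varepsilon)$; this indeed forces $|y(t,x_0;d)|\le\varepsilon$ for all $t\ge T(\varepsilon,R)$.

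The core of the proof is then to show that, on the range of values taken by $W$ along solutions, $\rho$ admits a uniform positive lower bound, from which the integral inequality above caps the time spent with $W\ge \sigma$ by $C_R/\rho_\sigma$ for any threshold $\sigma>0$ of interest. This is where the two alternative conditions enter. Under $(i)$, continuity and positive definiteness of $\rho$ together with $\liminf_{s\to+\infty}\rho(s)>0$ provide a positive lower bound $\rho_\sigma$ for $\rho$ on $[\sigma,+\infty)$. Under $(ii)$, \eqref{GrindEQ__8_}--\eqref{GrindEQ__3_bis} give the a priori bound $W(\phi(t,x_0;d))\le \zeta(Q(x_0))\le \zeta(\sup_{|x|\le R}Q(x))=:M_R$, and continuity and positive definiteness of $\rho$ then produce a positive lower bound on the compact interval $[\sigma,M_R]$. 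Applying the resulting uniform time estimate twice, first with $\sigma=r$ to locate a time $t^\star(R)$ after which $W<r$, then with $\sigma=a(\varepsilon)$ to bound the extra time needed for $W$ to drop below $a(\varepsilon)$, yields the desired uniform $T(\varepsilon,R)$; monotonicity of $W$ past $t^\star(R)$ then keeps the output bounded for all later times.

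The main obstacle is simply the uniformity requirement: every constant in the time estimate must depend only on $R$ and $\varepsilon$, never on $d$ nor on the precise trajectory. Under $(ii)$ this is transparent thanks to the a priori bound $M_R$, while under $(i)$ one has to argue that integrability of $\rho(W(\cdot))$ alone, together with the threshold structure of $\rho$ at infinity, precludes $W$ from persisting at large values long enough to spoil the estimate.
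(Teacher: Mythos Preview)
Your proposal is correct and follows the same route as the paper: reduce robust Lyapunov output stability to Proposition~\ref{prop3} and URGAOS to URGOA via Proposition~\ref{prop-2}, then combine the integral bound $\int_0^t \rho(W(\phi(s,x_0;d)))\,ds \le C_R$ with forward invariance and monotonicity of $W$ on the sublevel set $\{W<r\}$ and a uniform positive lower bound for $\rho$ on the relevant range of $W$ (supplied by $(i)$ directly, or by the a priori bound $W\le M_R$ under $(ii)$). Two minor remarks: the paper packages the invariance/monotonicity step as a single dichotomy (Fact~\ref{fact-1}, proved via the auxiliary function $U(x)=\tfrac12((r-W(x))^+)^2$) and then runs a one-shot contradiction with the threshold $\min\{r,a(\varepsilon)\}$ rather than your two-step cascade $\sigma=r$ then $\sigma=a(\varepsilon)$, but this is purely stylistic; more substantively, Proposition~\ref{prop3} requires $W(0)=0$, which is not an explicit hypothesis here---the paper first observes that it follows from \eqref{GrindEQ__7_}, $f(0,d)=0$, and positive definiteness of $\rho$, and you should too.
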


It is worth noting that \eqref{GrindEQ__5_bis} imposes that, locally, $W$ does not vanish unless the output is zero. Thus, the dissipation term $\rho(W(x))$ in \eqref{GrindEQ__7_} can very well involve only output terms. In other words, \eqref{GrindEQ__7_} allows for a Lyapunov function $V$ that dissipates in terms of the output only. Condition \eqref{GrindEQ__6_bis} then requests that this dissipation term does not increase along solutions when it is small. URGOA can then be concluded provided that either the function $\rho$ does not vanish at infinity (condition $i)$) or that $W$ is dominated by a function $Q$ that does not increase along solutions (condition $ii)$). Theorem 1 in \cite{KACH20} is thus a particular case of the above result, for which $Q=W$ and $\zeta(s)=s$ for all $s\geq 0$.

\vspace{3mm}
\begin{proof}[of Theorem \ref{thm_1}]
Notice that \eqref{GrindEQ__7_} and the fact that $f(0,d)=0$ for all $d\in D$ imply that $W(0)=0$. Robust Lyapunov output stability then follows from Proposition \ref{prop3}. We now proceed to establishing URGOA. By \eqref{GrindEQ__7_} and the assumed forward completeness, it holds for all $x_{0} \in {\mathbb R}^{n}$, all $d\in L_{loc}^{\infty} \left(\mRp ;D\right)$ and almost all $t\geq 0$ that
\begin{equation} \label{GrindEQ__9_} 
\frac{d}{d\, t} V\left(\phi (t,x_{0} ;d)\right)\le -\rho \left(W\left(\phi(t,x_{0} ;d)\right)\right).
\end{equation} 
We next establish the following.
\begin{fact}\label{fact-1}
    Given any $x_{0} \in {\mathbb R}^{n} $, any $d\in L_{loc}^{\infty } \left(\mRp ;D\right)$ and any $t\ge 0$, one of the following holds:
    \begin{itemize}
        \item $W(\phi (\tau ,x_{0} ;d))\ge r$ $\,\forall \tau \in \left[0,t\right]$, and $W\left(\phi (t,x_{0} ;d)\right)>r$
        \item $W(\phi (t,x_{0} ;d))\le \min \left\{r,W\left(\phi (\tau ,x_{0} ;d)\right)\right\}$ $\,\forall \tau \in \left[0,t\right]$.
    \end{itemize}
\end{fact}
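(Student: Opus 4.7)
The plan is to analyze the scalar function $w(\tau) := W(\phi(\tau, x_0; d))$, which is absolutely continuous in $\tau$. By the chain rule and \eqref{GrindEQ__6_bis}, $\dot w(\tau) \leq 0$ for almost every $\tau$ such that $w(\tau) < r$. A dichotomy then arises on $[0,t]$: either $w(\tau) \geq r$ throughout $[0, t]$, or there exists some $s^{\star} \in [0, t]$ at which $w(s^{\star}) < r$. In the first case, I would just inspect the endpoint value: if $w(t) > r$, the first alternative of Fact \ref{fact-1} is immediate; if instead $w(t) = r$, then $w(t) = r \leq w(\tau)$ for every $\tau \in [0, t]$ and $w(t) \leq r$, which gives the second alternative.

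The second case hinges on a forward-invariance claim that I would establish first: whenever $w(s_0) < r$ for some $s_0 \in [0, t]$, $w$ is nonincreasing on $[s_0, t]$ and $w(\tau) \leq w(s_0) < r$ for every $\tau \in [s_0, t]$. To prove it, set $\sigma := \sup\{\tau \in [s_0, t] : w(s) < r \text{ for all } s \in [s_0, \tau]\}$. By continuity of $w$, $\sigma > s_0$, and \eqref{GrindEQ__6_bis} yields $\dot w \leq 0$ a.e.\ on $[s_0, \sigma]$, so $w(\sigma) \leq w(s_0) < r$. If $\sigma < t$, then by continuity $w < r$ on a neighborhood of $\sigma$, contradicting maximality; hence $\sigma = t$.

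With this claim in hand, I would introduce $\tau_1 := \inf\{s \in [0, t] : w(s) < r\}$ together with a sequence $s_n \downarrow \tau_1$ satisfying $w(s_n) < r$. For each $n$, the claim gives $w(t) \leq w(\tau)$ for every $\tau \in [s_n, t]$ and $w(t) < r$. Sending $n \to \infty$ and invoking continuity of $w$ at $\tau_1$, I obtain $w(t) \leq w(\tau)$ for all $\tau \in [\tau_1, t]$. For $\tau \in [0, \tau_1)$, the definition of the infimum together with continuity yields $w(\tau) \geq r \geq w(t)$. Combining the two ranges, $w(t) \leq \min\{r, w(\tau)\}$ for every $\tau \in [0, t]$, which is the second alternative.

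The main subtlety is the behavior at the boundary value $\tau_1$: since \eqref{GrindEQ__6_bis} only guarantees dissipation when $W(x) < r$ strictly, one cannot directly assert monotonicity of $w$ at points where $w$ equals $r$. The limiting argument through $s_n \downarrow \tau_1$ is precisely what bypasses this obstacle, relying only on continuity of $w$ rather than on the dissipation at $\tau_1$ itself. Everything else reduces to routine ODE-style forward-invariance reasoning combined with the dichotomy above.
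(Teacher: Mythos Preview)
Your argument is correct, but it follows a different route from the paper. The paper introduces the auxiliary $C^{1}$ function
\[
U(x):=\tfrac{1}{2}\bigl((r-W(x))^{+}\bigr)^{2},
\]
observes that $\nabla U(x)f(x,d)=-(r-W(x))^{+}\nabla W(x)f(x,d)\ge 0$ for all $x$ and $d$ (the factor $(r-W(x))^{+}$ kills the contribution when $W(x)\ge r$, and \eqref{GrindEQ__6_bis} gives the sign when $W(x)<r$), and concludes that $t\mapsto U(\phi(t,x_{0};d))$ is non-decreasing. The inequality $(r-W(\phi(t,x_{0};d)))^{+}\ge (r-W(\phi(\tau,x_{0};d)))^{+}$ then yields both alternatives by splitting on whether $W(\phi(t,x_{0};d))>r$ or $\le r$.

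Your approach works directly on the trajectory $w(\tau)=W(\phi(\tau,x_{0};d))$, proving a forward-invariance claim and then handling the boundary $w=r$ through a limiting sequence $s_{n}\downarrow\tau_{1}$. This is more elementary in the sense that no auxiliary function is needed, and it makes the role of the strict inequality $W(x)<r$ in \eqref{GrindEQ__6_bis} quite explicit. The paper's trick, on the other hand, absorbs the boundary issue into the smoothness of $U$: since $U$ is $C^{1}$ everywhere (the squaring removes the kink of $(\cdot)^{+}$), no separate treatment of the set $\{W=r\}$ is required, and the whole argument collapses to a monotonicity statement plus a two-line case distinction. Both arguments are valid; the paper's is shorter and more algebraic, yours is closer to a bare-hands ODE invariance proof.
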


\begin{proof}
Consider the continuously differentiable function
\begin{equation} \label{GrindEQ__11_} 
U(x):=\frac{1}{2} \left(\left(r-W(x)\right)^{+} \right)^{2},\quad \forall x\in\mR^n.
\end{equation} 
Then it holds that 
\begin{align*}
\nabla U(x)f(x,d)=-\left(r-W(x)\right)^{+} \nabla W(x)f(x,d).
\end{align*}
By \eqref{GrindEQ__6_}, it follows that, for all $x\in {\mathbb R}^{n} $ and all $d\in D$,
\begin{equation} \label{GrindEQ__12_} 
\nabla U(x)f(x,d)\ge 0. 
\end{equation} 
Hence, for every $x_{0} \in {\mathbb R}^{n} $ and any $d\in L_{loc}^{\infty } \left(\mRp ;D\right)$, the mapping $t\mapsto U\left(\phi (t,x_{0} ;d)\right)$ is non-decreasing and we get from \eqref{GrindEQ__11_} that, for all $t\ge 0$ and all $\tau \in \left[0,t\right]$,
\begin{equation*} 
\left(r-W\left(\phi (t,x_{0} ;d)\right)\right)^{+} \ge \left(r-W\left(\phi (\tau ,x_{0} ;d)\right)\right)^{+}.  
\end{equation*} 
Fact \ref{fact-1} follows by considering the cases $W\left(\phi (t,x_{0} ;d)\right)>r$ and $W\left(\phi (t,x_{0} ;d)\right)\le r$. 
\end{proof}

\vspace{3mm}
The proof of Theorem \ref{thm_1} continues along two different lines according to whether $i)$ or $ii)$ holds.

\noindent \underline{Case $i)$}: $\liminf_{s\to+\infty} \rho(s)>0$. Let arbitrary $\varepsilon ,R>0$ be given and define the two following quantities:
\begin{align} 
\tilde{\rho }(\varepsilon )&:=\inf \left\{\rho (s)\, :\, \, s\ge \min \left\{r,a(\varepsilon )\right\}\, \right\}  \label{GrindEQ__14_} \\
T(\varepsilon ,R)&:=\frac{1+\max \left\{\, V(x)\, :\,  \left|x\right|\le R\, \right\}}{\tilde{\rho }(\varepsilon )}.  \label{GrindEQ__15_} 
\end{align} 
Since $\rho$ is positive definite, $i)$ ensures that $\tilde{\rho }(\varepsilon )>0$. We next prove by contradiction that $\left|y(t,x_{0} ;d)\right|\le \varepsilon $ for all $t\ge T(\varepsilon ,R)$, all $d\in L_{loc}^{\infty } \left(\mRp ;D\right)$ and all $x_{0} \in {\mathbb R}^{n} $ with $\left|x_{0} \right|\le R$, thus establishing URGOA. To that end, suppose that there exists $d\in L_{loc}^{\infty } \left(\mRp ;D\right)$, $x_{0} \in {\mathbb R}^{n} $ with $\left|x_{0} \right|\le R$ and $t\ge T(\varepsilon ,R)$ such that $\left|y(t,x_{0} ;d)\right|>\varepsilon $. By virtue of \eqref{GrindEQ__5_} and by distinguishing the cases $W\left(\phi (t,x_{0} ;d)\right)>r$ and $W\left(\phi (t,x_{0} ;d)\right)\le r$ we conclude that $W\left(\phi (t,x_{0} ;d)\right)\ge \min \left\{r,a(\varepsilon )\right\}$. It follows from Fact \ref{fact-1} that 
either $W\left(\phi (\tau ,x_{0} ;d)\right)\ge r$ for all $\tau \in \left[0,t\right]$, or $\min \left\{r,a(\varepsilon )\right\}\le W\left(\phi (t,x_{0} ;d)\right)\le W\left(\phi (\tau ,x_{0} ;d)\right)$ for all $\tau \in \left[0,t\right]$. In view of \eqref{GrindEQ__14_}, we get
\begin{equation} \label{GrindEQ__17_}
\tilde{\rho }(\varepsilon )\le \rho \left(W\left(\phi (\tau,x_{0} ;d)\right)\right), \quad \forall \tau\in \left[0,t\right].
\end{equation}                 
On the other hand, \eqref{GrindEQ__9_} ensures that
\begin{equation} \label{GrindEQ__18_} 
V\left(\phi (t,x_{0} ;d)\right)\le V\left(x_{0} \right)-\int _{0}^{t}\rho \left(W\left(\phi (\tau,x_{0} ;d)\right)\right)d\tau. 
\end{equation} 
Combining \eqref{GrindEQ__17_} and \eqref{GrindEQ__18_}, we get the inequality
\begin{equation} \label{GrindEQ__19_} 
V\left(\phi (t,x_{0} ;d)\right)\le V\left(x_{0} \right)-\tilde{\rho }(\varepsilon)t. 
\end{equation} 
Since $\left|x_{0} \right|\le R$, we get from \eqref{GrindEQ__19_} that
\begin{equation*} 
V\left(\phi (t,x_{0} ;d)\right)\le \max \left\{V(x)\, :\, \left|x\right|\le R\, \right\}-\tilde{\rho }(\varepsilon )t,
\end{equation*} 
Using the fact that $t\ge T(\varepsilon ,R)$ with $T$ defined in \eqref{GrindEQ__15_}, we get the contradiction that $V\left(\phi (t,x_{0} ;d)\right)<0$.

\vspace{2mm}
\noindent \underbar{Case $ii)$:} there exist $\zeta \in \cK_\infty$ and $Q\in C^{1} \left({\mathbb R}^{n} ;\mRp \right)$ satisfying \eqref{GrindEQ__8_}-\eqref{GrindEQ__3_bis}. In this case, it holds for all $d\in L_{loc}^{\infty } \left(\mRp ;D\right)$ and all $x_{0} \in {\mathbb R}^{n} $ that
\begin{equation*} 
Q\left(\phi (t,x_{0} ;d)\right)\le Q\left(x_{0} \right),\quad \forall t\geq 0. 
\end{equation*} 
In conjunction with \eqref{GrindEQ__8_}, this implies that
\begin{equation} \label{GrindEQ__22_} 
W\left(\phi (t,x_{0} ;d)\right)\le \zeta \left(Q\left(x_{0} \right)\right),\quad \forall t\geq 0.
\end{equation} 
Let arbitrary $\varepsilon ,R>0$ be given and define
\begin{align*} 
\bar{\rho }(\varepsilon ,R)&:=\min \left\{\rho (s)\, :\, s\in[\min \left\{r,a(\varepsilon )\right\}, r+\bar\zeta(R)]\right\}\\ 
\bar{T}(\varepsilon ,R)&:=\frac{1+\max \left\{\, V(x)\, :\, \left|x\right|\le R\, \right\}}{\bar{\rho }(\varepsilon ,R)}.
\end{align*} 
where $\bar\zeta(R):=\zeta \left(\max \left\{\, Q(x)\, :\, \left|x\right|\le R\right\}\right)$.
Here again we prove by contradiction that $\left|y(t,x_{0} ;d)\right|\le \varepsilon $ for all $t\ge \bar{T}(\varepsilon ,R)$, all $d\in L_{loc}^{\infty } \left(\mRp ;D\right)$ and all $x_{0} \in {\mathbb R}^{n} $ with $\left|x_{0} \right|\le R$, thus establishing URGOA. To that end, suppose on the contrary that there exists $d\in L_{loc}^{\infty } \left(\mRp ;D\right)$, $x_{0} \in {\mathbb R}^{n} $ with $\left|x_{0} \right|\le R$ and $t\ge \bar{T}(\varepsilon ,R)$ such that $\left|y(t,x_{0} ;d)\right|>\varepsilon $. By virtue of \eqref{GrindEQ__5_} and by distinguishing the cases $W\left(\phi (t,x_{0} ;d)\right)>r$ and $W\left(\phi (t,x_{0} ;d)\right)\le r$ we get that $W\left(\phi (t,x_{0} ;d)\right)\ge \min \left\{r,a(\varepsilon )\right\}$. It follows from Fact \ref{fact-1} and \eqref{GrindEQ__22_} that either $\zeta \left(Q\left(x_{0} \right)\right)\ge W\left(\phi (\tau ,x_{0} ;d)\right)\ge r$ for all $\tau \in \left[0,t\right]$, or $\min \left(r,a(\varepsilon )\right)\le W\left(\phi (t,x_{0} ;d)\right)\le W\left(\phi (\tau ,x_{0} ;d)\right)\le \zeta \left(Q\left(x_{0} \right)\right)$ for all $\tau \in \left[0,t\right]$. Since $\left|x_{0} \right|\le R$, we get $Q\left(x_{0} \right)\le \max \left\{Q(x)\, :\, \left|x\right|\le R\, \right\}$ and it follows that
\begin{equation} \label{GrindEQ__26_}
\bar{\rho }(\varepsilon ,R)\le \rho \left(W\left(\phi (\tau,x_{0} ;d)\right)\right), \quad \forall \tau\in \left[0,t\right].
\end{equation} 
Continuing exactly as in Case $i)$, we arrive at the contradiction that $V\left(\phi (t,x_{0} ;d)\right)<0$.

Finally, if Assumption \ref{ass-1} holds, then URGAOS follows from URGAO by Proposition \ref{prop-2}.
\end{proof}

\subsection{Non-uniform notions}\label{sec-Barbalat}

An alternative way to ensure RGOA is with Barb\u{a}lat's lemma, which states that any integrable function tends to zero, provided that it is uniformly continuous (see for instance \cite{KHALIL2002}). In \cite{KACH20}, this classical result has been enhanced, by relaxing uniform continuity of the considered signal to the following.

\begin{definition}[Quasi-uniform continuity] A function $\varphi:\mRp \to {\mathbb R}$ is said to be quasi-uniformly continuous if, for every $\varepsilon >0$, there exists $\delta (\varepsilon )>0$ such that $\varphi(t)-\varphi(t_{0} )<\varepsilon $ for all $t_{0} \ge 0$ and all $t\in[t_0,t_0+\delta (\varepsilon )]$. 
\end{definition}

A sufficient condition to guarantee quasi-uniform continuity has been provided in \cite[Proposition 2]{KACH20}, which can be slightly relaxed as follows.

\begin{proposition}[On quasi-uniform continuity]\label{prop-5} The function $\varphi:\mRp \to {\mathbb R}$ is quasi-uniformly continuous if at least one of the following statements holds:
\begin{itemize}
    \item[$i)$] there exists $M>0$ such that the function $\psi:t\mapsto \varphi(t)-Mt$ is non-increasing on $\mRp$;
    \item[$ii)$] $\varphi$ is continuous and there exists $M\ge 0$ such that $\limsup_{h\to 0^{+} } \frac{\varphi(t+h)-\varphi(t)}{h}\le M$ for all $t\ge 0$;
    \item[$iii)$] $\varphi$ is absolutely continuous and there exists $M\ge 0$ such that $\dot{\varphi}(t)\le M$ for almost all $t\ge 0$.
\end{itemize}
\end{proposition}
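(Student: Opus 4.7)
The plan is to reduce all three statements to the single linear comparison bound $\varphi(t)-\varphi(t_0)\le M(t-t_0)$ for all $0\le t_0\le t$. Once this inequality is available, quasi-uniform continuity follows immediately: given $\varepsilon>0$, take $\delta(\varepsilon):=\varepsilon/(2M)$ when $M>0$ (and any $\delta>0$ when $M=0$), so that $\varphi(t)-\varphi(t_0)\le M\delta<\varepsilon$ for every $t_0\ge 0$ and every $t\in[t_0,t_0+\delta]$.

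Cases $i)$ and $iii)$ are immediate. Under $i)$, the monotonicity of $\psi(t):=\varphi(t)-Mt$ directly gives $\varphi(t)-Mt\le \varphi(t_0)-Mt_0$ for all $t\ge t_0$, which is precisely the desired inequality. Under $iii)$, absolute continuity of $\varphi$ together with the fundamental theorem of calculus yields $\varphi(t)-\varphi(t_0)=\int_{t_0}^t\dot\varphi(s)\,ds$, and the a.e. bound $\dot\varphi\le M$ delivers the same inequality.

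Case $ii)$ is the substantive step and amounts to the classical comparison lemma for upper right Dini derivatives, which I would reprove for completeness. Argue by contradiction: fix $0\le t_0<t_1$ with $\varphi(t_1)-\varphi(t_0)>M(t_1-t_0)$, choose $\eta>0$ small enough that the strict inequality $\varphi(t_1)-\varphi(t_0)>(M+\eta)(t_1-t_0)$ still holds, and introduce the continuous function $g(t):=\varphi(t)-\varphi(t_0)-(M+\eta)(t-t_0)$, which satisfies $g(t_0)=0$ and $g(t_1)>0$. Set $t^{\star}:=\sup\{t\in[t_0,t_1]\colon g(t)\le 0\}$; continuity of $g$ then forces $t^{\star}<t_1$, $g(t^{\star})=0$, and $g(t)>0$ for every $t\in(t^{\star},t_1]$. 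Rewriting the latter gives $\varphi(t^{\star}+h)-\varphi(t^{\star})>(M+\eta)h$ for all sufficiently small $h>0$; taking $\limsup_{h\to 0^+}$ of $(\varphi(t^{\star}+h)-\varphi(t^{\star}))/h$ then contradicts the hypothesis at $t=t^{\star}$. The main obstacle is precisely this Dini-type comparison argument of case $ii)$; cases $i)$ and $iii)$ are one-line consequences of the hypothesis and require no further machinery.
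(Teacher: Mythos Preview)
The paper does not actually supply its own proof of this proposition; it is presented as a slight relaxation of \cite[Proposition~2]{KACH20} and stated without argument. Your proof is correct and self-contained: reducing all three items to the single inequality $\varphi(t)-\varphi(t_0)\le M(t-t_0)$ is the natural route, and your contradiction argument for case~$ii)$ (the Dini comparison) is carried out cleanly.

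For comparison, the only hint the paper gives about its intended line of reasoning is in the proof of Proposition~\ref{prop-6}, where the analogous Dini step is handled by citing \cite[Chapter~6]{BARO01} to conclude that a function with nonpositive upper right Dini derivative is non-increasing, and then invoking item~$i)$ of Proposition~\ref{prop-5}. In other words, the paper's implicit scheme is $ii)\Rightarrow i)$ via an external reference, whereas you reprove that implication directly. The mathematical content is the same; your version has the advantage of being self-contained, while the paper's route is shorter on the page.
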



We also report the following result.

\begin{proposition}[Saturated functions]\label{prop-6}
    If the function $\varphi:\mRp\to\mR$ is absolutely continuous and there exist constants $M\ge 0$ and  $N\in {\mathbb R}$ such that $\dot{\varphi}(t)\le M$ for almost all $t\in \left\{\, \tau\ge 0\, :\, \varphi(\tau)<N\right\}$, then the function $\tilde \varphi:\mRp\to\mR$ defined as $\tilde \varphi(t):=\min\{\varphi(t),N\}$ is quasi-uniformly continuous.
\end{proposition}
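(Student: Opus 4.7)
My plan is to reduce the statement directly to Proposition~\ref{prop-5}$iii)$ by showing that $\tilde\varphi$ is absolutely continuous and that $\dot{\tilde\varphi}(t)\le M$ for almost every $t\ge 0$. Once this is done, the conclusion is immediate.

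First, I would argue that $\tilde\varphi$ is absolutely continuous. Writing $\tilde\varphi(t)=N-(N-\varphi(t))^{+}$, one sees that $\tilde\varphi$ is obtained from the absolutely continuous function $\varphi$ by composition with $1$-Lipschitz maps ($s\mapsto N-s$ and $s\mapsto s^{+}$) and addition of a constant, all of which preserve absolute continuity on every compact subinterval of $\mRp$. Consequently $\tilde\varphi$ is absolutely continuous and differentiable almost everywhere.

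Next, I would bound $\dot{\tilde\varphi}$ a.e. by partitioning $\mRp$ into the three measurable sets $A:=\{t\ge 0:\varphi(t)<N\}$, $B:=\{t\ge 0:\varphi(t)>N\}$, and $C:=\{t\ge 0:\varphi(t)=N\}$. Since $\varphi$ is continuous, $A$ and $B$ are open. On $A$ we have $\tilde\varphi=\varphi$, so the hypothesis gives $\dot{\tilde\varphi}(t)=\dot\varphi(t)\le M$ for almost every $t\in A$. On $B$ we have $\tilde\varphi\equiv N$, so $\dot{\tilde\varphi}(t)=0\le M$ for every $t\in B$. For the level set $C$, I would invoke the classical fact that any absolutely continuous function has vanishing derivative almost everywhere on the preimage of a single point; applied to $\tilde\varphi$, this yields $\dot{\tilde\varphi}(t)=0\le M$ for almost every $t\in C$. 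Gathering the three cases produces $\dot{\tilde\varphi}(t)\le M$ for almost every $t\ge 0$, and Proposition~\ref{prop-5}$iii)$ concludes.

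The only mildly delicate point is the treatment of the level set $C$, where we have no hypothesis on $\dot\varphi$. One can alternatively bypass the level-set lemma by noting that $\tilde\varphi\le N$ globally while $\tilde\varphi=N$ on $C$, so at any density point of $C$ where $\dot{\tilde\varphi}$ exists it must satisfy $\dot{\tilde\varphi}\le 0\le M$; since almost every point of $C$ is a density point and a point of differentiability, this is enough. The assumption $M\ge 0$ is precisely what allows the contributions of $B$ and $C$ (where the natural bound is $0$) to be absorbed into the single bound $M$.
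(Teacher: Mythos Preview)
Your proof is correct, but the route differs from the paper's. The paper does not invoke Proposition~\ref{prop-5}$iii)$; instead it defines $\psi(t):=\tilde\varphi(t)-Mt$ and shows, for \emph{every} $t\ge 0$, the Dini bound $\limsup_{h\to 0^{+}}\frac{\psi(t+h)-\psi(t)}{h}\le 0$ by splitting into the two cases $\varphi(t)<N$ (where continuity gives a small interval on which $\dot\varphi\le M$ and absolute continuity of $\varphi$ yields the increment bound) and $\varphi(t)\ge N$ (where $\tilde\varphi(t)=N\ge\tilde\varphi(t+h)$ trivially). A pointwise Dini criterion from \cite{BARO01} then gives that $\psi$ is non-increasing, and Proposition~\ref{prop-5}$i)$ concludes. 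Your argument is more measure-theoretic: you first establish absolute continuity of $\tilde\varphi$ via Lipschitz composition and then bound $\dot{\tilde\varphi}$ a.e.\ on the three-set partition, the only nontrivial piece being the level set $C$, which you handle either by the standard ``AC functions have zero derivative a.e.\ on level sets'' fact or, more simply, by observing that any differentiability point of $\tilde\varphi$ in $C$ is a global maximum so the derivative there is $0$. Your approach has the advantage of being self-contained (no appeal to \cite{BARO01}), while the paper's pointwise Dini argument sidesteps the need to verify absolute continuity of $\tilde\varphi$ and to treat the level set separately.
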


\begin{proof}
Consider the function $\psi:\mRp\to\mR$ defined as 
\begin{align*}
    \psi(t):=\min \{\varphi(t),N\}-Mt,\quad \forall t\geq 0.
\end{align*}
We show next that
\begin{align}\label{eq-1}
    \limsup_{h\to 0^{+} } \frac{\psi(t+h)-\psi(t)}{h} \le 0,\quad \forall t\ge 0.
\end{align}
By virtue of the results in \cite[Chapter 6]{BARO01}, \eqref{eq-1} guarantees that $\psi$ is non-increasing and Proposition \ref{prop-6} then follows from Proposition \ref{prop-5}. In order to establish \eqref{eq-1}, we consider two cases. In the case when $\varphi(t)<N$ then (by continuity) there exists $\delta >0$ for which $\varphi(t+h)<N$ for all $h\in \left(0,\delta \right)$. By assumption, this guarantees that $\dot{\varphi}(\tau)\le M$ for almost all $\tau\in \left(t,t+\delta \right)$. Consequently, by absolute continuity of $\varphi$, we get that, for all $h\in \left(0,\delta \right)$,
\begin{align*}
\min \{\varphi(t+h),N\}&=\varphi(t+h)\\
&=\varphi(t)+\int _{t}^{t+h}\dot{\varphi}(\tau)d\tau \\
&\le \varphi(t)+Mh=\min \{\varphi(t),N\}+Mh.
\end{align*}
Therefore, we obtain \eqref{eq-1}. On the other hand, if $\varphi(t)\ge N$ then for all $h\geq 0$ we have that
\begin{align*}
\psi(t)&=N-Mt\\
&\ge \min \{\varphi(t+h),N\}-Mt\\
&\ge \min \{\varphi(t+h),N\}-M (t+h)=\psi(t+h),
\end{align*}
and \eqref{eq-1} follows in that case too.    
\end{proof}

One of the key contributions of \cite{KACH20} was to show that Barb\u{a}lat's lemma, originally stated for uniformly continuous functions, remains valid for quasi-uniformly continuous functions, as recalled next.

\begin{lemma}[Extension of Barb\u{a}lat's lemma, \cite{KACH20}]\label{lem_barbalat} Let $\rho :\mRp \to \mRp $ be a continuous, positive definite function and let $\varphi :\mRp \to \mRp $ be a given function such that either $\varphi $ or $-\varphi $ is quasi-uniformly continuous. Suppose that either $\rho$ is non-decreasing or $\varphi $ is bounded. Then the following implication holds:
\begin{align*}
    \int_0^{+\infty} \rho(\varphi(t))dt<+\infty\quad \Rightarrow \quad \lim_{t\to +\infty }\varphi (t)=0.
\end{align*}
\end{lemma}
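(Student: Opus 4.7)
The natural approach is proof by contradiction: assume $\int_0^{+\infty}\rho(\varphi(t))dt<+\infty$ but $\varphi(t)$ does not converge to $0$. Since $\varphi\ge 0$, this failure produces some $\varepsilon>0$ and a sequence $t_n\to+\infty$ with $\varphi(t_n)\ge \varepsilon$ for all $n$. The crux is to manufacture, around each $t_n$, a time interval of uniformly positive length on which $\rho(\varphi(\cdot))$ is uniformly bounded away from zero; summing those contributions will then contradict integrability.

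First I would use quasi-uniform continuity to extract intervals on which $\varphi$ stays above $\varepsilon/2$. If $-\varphi$ is quasi-uniformly continuous, applying the definition with tolerance $\varepsilon/2$ yields $\delta>0$ such that $\varphi(t_n)-\varphi(t)<\varepsilon/2$ for every $t\in[t_n,t_n+\delta]$, hence $\varphi(t)>\varepsilon/2$ on the forward interval $I_n:=[t_n,t_n+\delta]$. If instead $\varphi$ itself is quasi-uniformly continuous, applying the definition with base point $t_0\in[t_n-\delta,t_n]$ and endpoint $t_n$ gives $\varphi(t_n)-\varphi(t_0)<\varepsilon/2$, so $\varphi>\varepsilon/2$ on the backward interval $I_n:=[t_n-\delta,t_n]$. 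In both cases each $t_n$ comes with an interval of fixed length $\delta$ on which $\varphi\ge \varepsilon/2$.

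Next I would exploit the alternative assumption on $\rho$ or $\varphi$ to get a uniform lower bound on $\rho(\varphi(\cdot))$ over the $I_n$. If $\rho$ is non-decreasing, then $\rho(\varphi(t))\ge \rho(\varepsilon/2)=:c>0$ on $I_n$. If instead $\varphi$ is bounded by some $M>0$, then $\varphi(t)\in[\varepsilon/2,M]$ on $I_n$ and the continuous, positive definite function $\rho$ attains a positive minimum $c:=\min_{s\in[\varepsilon/2,M]}\rho(s)>0$ on this compact set. Finally, after passing to a subsequence $(t_{n_k})$ to make the intervals $I_{n_k}$ pairwise disjoint (possible since $t_n\to+\infty$ and $\delta$ is fixed), one gets $\int_0^{+\infty}\rho(\varphi(t))dt\ge \sum_k c\delta=+\infty$, contradicting the integrability hypothesis.

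I expect the main obstacle to be cleanly unifying the two directions of quasi-uniform continuity: with the one-sided definition used in the paper, the interval on which $\varphi$ is known to remain large lies to the right of $t_n$ in one case and to the left in the other. This must be tracked carefully so that the intervals stay in $[0,+\infty)$ and can be made disjoint (in the backward case one may need to discard finitely many initial indices to ensure $t_n-\delta\ge 0$), and so that the dichotomy on $\rho$ versus boundedness of $\varphi$ is invoked at the right moment.
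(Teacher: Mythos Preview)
The paper does not actually prove this lemma; it is merely recalled from \cite{KACH20} and stated without proof, so there is no in-paper argument to compare your proposal against. Your contradiction argument is correct and is essentially the standard proof of this Barb\u{a}lat-type result: the one-sided quasi-uniform continuity is exactly what is needed to produce, around each bad time $t_n$, a fixed-length interval (forward for $-\varphi$, backward for $\varphi$) on which $\varphi\ge\varepsilon/2$, and the dichotomy ``$\rho$ non-decreasing'' versus ``$\varphi$ bounded'' is precisely what yields the uniform lower bound $c>0$ on $\rho\circ\varphi$ over those intervals. Your handling of the two directions and of the disjointness step (dropping finitely many indices when working backward) is fine.
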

This lemma was used in \cite[Theorem 2]{KACH20} to provide a Lyapunov condition for asymptotic output stability of input-free systems. The following result extends that result by allowing for systems with inputs.

\begin{theorem}[Lyapunov condition for RGOA]\label{thm-2} Suppose that there exist a constant $r>0$, functions $V,W,Q\in C^{1} \left({\mathbb R}^{n} ;\mRp \right)$, $a\in \cK$ and a continuous, positive definite function $\rho :\mRp \to \mRp $ such that, for all $x\in\mR^n$ and all $d\in D$,
\begin{align}
    \nabla V(x)f(x,d)&\leq -\rho(W(x)) \label{GrindEQ__7_ter} \\
    \nabla Q(x)f(x,d)&\leq 0, \label{GrindEQ__3_ter} 
\end{align}
and, for all $x\in\mR^n$ with $W(x)<r$,
\begin{align}\label{GrindEQ__5_ter}
    a(|h(x)|)\leq W(x).
\end{align}
Moreover, suppose that there exists a continuous function $\gamma :\mRp \to \mRp $ such that one of the following holds:
\begin{itemize}
    \item[$i)$] for all $d\in D$ and for all $x\in {\mathbb R}^{n} $ with $W(x)<r$:
    \begin{equation} \label{GrindEQ__27_} 
\nabla W(x)f(x,d)\le \gamma \left(Q(x)\right); 
\end{equation} 
\item[$ii)$]  for all $x\in {\mathbb R}^{n} $ and all $d\in D$: 
\begin{equation} \label{GrindEQ__28_} 
\nabla W(x)f(x,d)\ge -\gamma \left(Q(x)\right). 
\end{equation} 
\end{itemize}
Finally, suppose that either $\liminf_{s\to +\infty } \rho (s)>0$ or that there exists a function $\zeta \in \cK_{\infty }$ such that
\begin{align} \label{GrindEQ__8_ter} 
W(x)\le \zeta \left(Q(x)\right),\quad \forall x\in\mR^n.
\end{align} 
Then the system \eqref{GrindEQ__1_}-\eqref{GrindEQ__2_}  is RGOA.
\end{theorem}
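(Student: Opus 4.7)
My plan is to apply the extended Barb\u{a}lat Lemma~\ref{lem_barbalat} to the saturated signal $\tilde W(t) := \min\{W(\phi(t, x_0; d)), r\}$. Fix $x_0 \in \mR^n$ and $d \in \cL(\mRp, D)$, and write $\phi(t) := \phi(t, x_0; d)$. Integrating \eqref{GrindEQ__7_ter} along the trajectory yields $\int_0^{+\infty} \rho(W(\phi(\tau)))\, d\tau \leq V(x_0) < +\infty$, and \eqref{GrindEQ__3_ter} gives $Q(\phi(\tau)) \leq Q(x_0)$ for every $\tau \geq 0$. By continuity of $\gamma$, the quantity $M := \max\{\gamma(s) : 0 \leq s \leq Q(x_0)\}$ is finite, and hence $\gamma(Q(\phi(\tau))) \leq M$ uniformly in $\tau$.

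Next, I would establish quasi-uniform continuity of $\tilde W$ (in Case~$i)$) or of $-\tilde W$ (in Case~$ii)$). In Case~$i)$, inequality \eqref{GrindEQ__27_} gives $\nabla W(\phi)f(\phi, d) \leq M$ on $\{W < r\}$, so Proposition~\ref{prop-6} applied with $\varphi = W\circ\phi$ and $N = r$ directly yields the quasi-uniform continuity of $\tilde W$. In Case~$ii)$, inequality \eqref{GrindEQ__28_} gives $\nabla W(\phi) f(\phi, d) \geq -M$ everywhere, so $\frac{d}{dt}\tilde W \geq -M$ almost everywhere (the derivative equals $\nabla W \cdot f$ on $\{W < r\}$ and vanishes on $\{W > r\}$); Proposition~\ref{prop-5}, item $iii)$, applied to the absolutely continuous function $-\tilde W$ then yields the quasi-uniform continuity of $-\tilde W$.

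The main obstacle is to verify the integrability $\int_0^{+\infty} \rho(\tilde W(\tau))\, d\tau < +\infty$, as this must be handled under either of the two alternative hypotheses on $\rho$ and $W$. Splitting the integral over $\{\tau : W(\phi(\tau)) < r\}$, on which $\rho(\tilde W) = \rho(W)$ is already integrable by the first step, and over $\{\tau : W(\phi(\tau)) \geq r\}$, on which $\rho(\tilde W) \equiv \rho(r)$ is constant, it suffices to show that this latter set has finite Lebesgue measure. In both alternatives one obtains a positive lower bound $c$ of $\rho$ on the relevant range of $W$: if $\liminf_{s \to +\infty} \rho(s) > 0$, continuity and positive definiteness of $\rho$ imply $c := \inf_{s \geq r} \rho(s) > 0$; if instead $W \leq \zeta(Q)$ holds, then $W(\phi(\tau)) \leq \zeta(Q(x_0)) =: K$ for every $\tau \geq 0$, so $\rho$ admits a positive minimum $c$ on the compact interval $[r, K]$. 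In either case $c \cdot \mu(\{\tau : W(\phi(\tau)) \geq r\}) \leq \int \rho(W(\phi(\tau)))\,d\tau \leq V(x_0)$, so the measure is finite and the desired integrability follows.

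Finally, Lemma~\ref{lem_barbalat} applied to $\varphi = \tilde W$, which is non-negative, bounded by $r$, and with $\pm\varphi$ quasi-uniformly continuous, gives $\tilde W(t) \to 0$. Hence $W(\phi(t)) < r$ for $t$ sufficiently large and $W(\phi(t)) \to 0$, so \eqref{GrindEQ__5_ter} together with the fact that $a \in \cK$ forces $|y(t, x_0; d)| \to 0$, establishing RGOA.
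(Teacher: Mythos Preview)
Your proof is correct and uses the same key ingredients as the paper (the extended Barb\u{a}lat Lemma~\ref{lem_barbalat} together with Propositions~\ref{prop-5} and~\ref{prop-6}), but the packaging is genuinely different in two respects.

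First, the paper constructs in each case an auxiliary continuous, non-decreasing, positive-definite function $\bar\rho\le\rho$ (via $\tilde\rho(s)=\inf_{z\ge s}\rho(z)$ and a mollification step) and then applies Lemma~\ref{lem_barbalat} with this $\bar\rho$; the monotonicity of $\bar\rho$ is what allows passing from $\int\bar\rho(W(\phi))\,d\tau<\infty$ to $\int\bar\rho(\min\{W(\phi),r\})\,d\tau<\infty$. You bypass this construction entirely: you keep the original $\rho$, split the time axis into $\{W(\phi)<r\}$ and $\{W(\phi)\ge r\}$, and show the second set has finite measure by lower-bounding $\rho$ on $[r,\infty)$ (or on $[r,\zeta(Q(x_0))]$). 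This is cleaner, and because $\tilde W$ is bounded by $r$, the boundedness alternative in Lemma~\ref{lem_barbalat} is automatically available, so no monotone $\bar\rho$ is needed.

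Second, in Case~$ii)$ the paper works with $W\circ\phi$ itself (not saturated): since \eqref{GrindEQ__28_} holds for all $x$, $-W\circ\phi$ is quasi-uniformly continuous directly by Proposition~\ref{prop-5}, and Lemma~\ref{lem_barbalat} is applied to $W\circ\phi$ using either the non-decreasing $\bar\rho$ or the bound $W\le\zeta(Q(x_0))$. You instead treat all cases uniformly with $\tilde W$. Your approach is more economical; the paper's avoids having to argue about the derivative of the saturation.

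Two small points worth tightening: in Case~$ii)$ you handle $\{W<r\}$ and $\{W>r\}$ but not the level set $\{W(\phi)=r\}$; there the a.e.\ derivative of the absolutely continuous function $\tilde W$ is $0$ (standard fact for level sets), so $\frac{d}{dt}\tilde W\ge -M$ still holds a.e. Also, when $\zeta(Q(x_0))<r$ the interval $[r,\zeta(Q(x_0))]$ is empty, but then so is $\{W(\phi)\ge r\}$ and the step is vacuous. Neither affects the validity of your argument.
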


In the case when $\rho$ does not vanish at infinity, this theorem states that RGOA can be established based on a dissipation rate that involves a term $W(x)$ (possibly depending on the output only) provided that this function $W$ either increases at most at an input-independent rate (condition \eqref{GrindEQ__27_}) or decreases not faster than an input-free rate (condition \eqref{GrindEQ__28_}). Hence, the conditions imposed on $W$ are much less stringent than in Theorem \ref{thm_1}, at the price of a possible lack of uniformity of the obtained output stability property.

Even in the output-free case, Theorem \ref{thm-2} constitutes a novelty with respect to \cite[Theorem 2]{KACH20} where the function $Q$ was taken as $V$ itself and condition $i)$ was imposed for all $x\in\mR^n$.

We also stress that $i)$ is imposed only for small values of $W(x)$ whereas $ii)$ is imposed for all $x\in\mR^n$. This discrepancy results from the proof techniques we employ here. We do not know whether the result would still be valid if $ii)$ was imposed only for small values of $W(x)$.

\begin{proof}[of Theorem \ref{thm-2}] Let arbitrary $x_{0} \in {\mathbb R}^{n} $ and $d\in L_{loc}^{\infty } \left(\mRp ;D\right)$ be given. Notice that \eqref{GrindEQ__7_} and the forward completeness assumption imply that, for all $t\geq 0$,
\begin{equation} \label{GrindEQ__29_} 
\hspace{-8pt} V\left(\phi (t,x_{0} ;d)\right)+\int _{0}^{t}\rho \left(W\left(\phi (\tau,x_{0} ;d)\right)\right)d\tau \le V\left(x_{0} \right).
\end{equation} 
Moreover, \eqref{GrindEQ__3_ter} guarantees  that  
\begin{align}\label{GrindEQ__21_ter}
    Q(\phi(t,x_0;d))\leq Q(x_0),\quad \forall t\geq 0
\end{align}
We divide the proof in four cases, according to which assumption in the theorem is fulfilled.

\noindent \underline{Case 1:} condition $i)$ holds and $\liminf_{s\to +\infty } \rho (s)>0$. In this case, we can define the non-decreasing function:
\begin{align}\label{GrindEQ__30_} 
    \tilde \rho(s):=\left\{\begin{array}{cl}
    \inf_{z\geq s} \rho(z)&\quad \textrm{if } s\geq 0\\
    0&\quad \textrm{if } s<0.
    \end{array}\right.
\end{align}
We also define
\begin{equation} \label{GrindEQ__31_} 
\bar{\rho }\left(s\right):=\int _{-1}^{0}\tilde{\rho }\left(s+z\right)dz,\quad \forall s\geq 0. 
\end{equation} 
This function $\bar{\rho }:\mRp \to \mRp $ is then continuous, positive definite and non-decreasing and it satisfies $\bar{\rho }\left(s\right)\le \rho \left(s\right)$ for all $s\ge 0$. Consequently, we get from \eqref{GrindEQ__29_} that
\begin{equation} \label{GrindEQ__32_} 
\int _{0}^{t}\bar{\rho }\left(W\left(\phi (s,x_{0} ;d)\right)\right)ds \le V\left(x_{0} \right),\quad \forall t\geq 0.
\end{equation} 
Since $\bar{\rho }$ is non-decreasing, this ensures that
\begin{equation} \label{GrindEQ__33_} 
\int _{0}^{t}\bar{\rho }\left(\min \left\{W\left(\phi (\tau,x_{0} ;d)\right),r\right\}\right)d\tau \le V\left(x_{0} \right),
\end{equation} 
for all $t\ge 0$.
Since $W$ is continuously differentiable, the mapping $ t\mapsto W\left(\phi (t,x_{0} ;d)\right)$ is absolutely continuous on $\mRp$. Defining $M:=\max \left\{\, \gamma \left(s\right)\, :\, s\in[0,Q\left(x_{0} \right)]\, \right\}$, we get from \eqref{GrindEQ__27_} and \eqref{GrindEQ__21_ter} that, \textcolor{blue}{for almost all $t\geq 0$, 
\begin{equation} \label{GrindEQ__34_}
W\left(\phi (t,x_{0} ;d)\right)<r\ \Rightarrow \ \frac{d}{dt} \left(W\left(\phi (t,x_{0} ;d)\right)\right)\le M.
\end{equation} 
}
By Proposition \ref{prop-5}, $t\mapsto \min \left\{W\left(\phi (t,x_{0} ;d)\right),r\right\}$ is quasi-uniformly continuous and \eqref{GrindEQ__33_} ensures by Lemma \ref{lem_barbalat} that 
\[
\lim_{t\to +\infty } \min \left\{W\left(\phi (t,x_{0} ;d)\right),r\right\}=0.
\] 
Since $r>0$, we obtain $\lim_{t\to +\infty } W\left(\phi (t,x_{0} ;d)\right)=0$ which, by \eqref{GrindEQ__5_ter}, gives $\lim_{t\to +\infty } y(t,x_{0} ;d)=0$.

\noindent \underbar{Case 2:} condition $i)$ holds and there exists $\zeta \in \cK_{\infty }$  such that \eqref{GrindEQ__8_ter} holds. First observe that estimate \eqref{GrindEQ__21_ter} in conjunction with \eqref{GrindEQ__8_ter} implies that 
\begin{align}\label{GrindEQ__22_ter}
   W(\phi(t,x_0;d))\leq \zeta(Q(x_0)), \quad \forall t\geq 0.
\end{align}
\textcolor{blue}{Let $\underline \rho:\mRp\to\mRp$ be defined as 
\begin{align*}
\underline\rho(s):=\min \left\{\rho \left(z\right): z\hspace{-1mm}\in\hspace{-1mm} [\min \left\{s,\zeta \hspace{-1mm}\left(Q(x_{0} )\right)\right\}, \zeta\hspace{-1mm}\left(Q(x_{0} )\right)\hspace{-1mm}+\hspace{-1mm}1] \right\}
\end{align*}
for all $s\geq 0$ and consider} the non-decreasing function:
\begin{align}\label{GrindEQ__35_} 
     \tilde \rho(s):=\left\{\begin{array}{cl} \underline\rho(s)
    &\quad \textrm{if } s\geq 0\\
    0&\quad \textrm{if } s<0.
    \end{array}\right.
\end{align}
We also define $\bar{\rho }:\mRp \to \mRp $ by means of \eqref{GrindEQ__31_}. Then $\bar{\rho }$ is a continuous, positive definite and non-decreasing function that satisfies $\bar{\rho }\left(s\right)\le \rho \left(s\right)$ for all $s\in \left[0,\zeta \left(Q(x_{0} )\right)+1\right]$. Consequently, we get \eqref{GrindEQ__32_} from \eqref{GrindEQ__29_} and \eqref{GrindEQ__22_ter} \textcolor{red}. Continuing as in the proof of Case 1, we establish that $\lim_{t\to +\infty } y(t,x_{0} ;d)=0$.

\noindent \underbar{Case 3:} condition $ii)$ holds and $\liminf_{s\to +\infty } \rho (s)>0$. In this case we can define the function \textit{$\tilde{\rho }:{\mathbb R}\to \mRp $} by means of \eqref{GrindEQ__30_} and the function \textit{$\bar{\rho }:\mRp \to \mRp $} by means of \eqref{GrindEQ__31_}: the resulting function $\bar{\rho }$ is continuous, positive definite and non-decreasing and satisfies $\bar{\rho }\left(s\right)\le \rho \left(s\right)$ for all $s\ge 0$. Consequently, we get \eqref{GrindEQ__32_} from \eqref{GrindEQ__29_}. Since $W$ is continuously differentiable, the mapping $t\mapsto W\left(\phi (t,x_{0} ;d)\right)$ is absolutely continuous on $\mRp $. By virtue of \eqref{GrindEQ__28_} and \eqref{GrindEQ__21_ter},  it holds that
\begin{equation} \label{GrindEQ__36_} 
\frac{d}{d\, t} \left(-W\left(\phi (t,x_{0};d )\right)\right)\le M,\quad \forall t\geq 0\ a.e.,
\end{equation} 
with $M:=\max \left\{\gamma \left(s\right)\, :\, s\in[0,Q\left(x_{0} \right)]\right\}$. By Proposition \ref{prop3}, it follows that the mapping $t\mapsto -W\left(\phi (t,x_{0} ;d)\right)$ is quasi-uniformly continuous on $\mRp$. Therefore, \eqref{GrindEQ__32_} ensures by Lemma \ref{lem_barbalat} that $\lim_{t\to +\infty } W\left(\phi (t,x_{0} ;d)\right)=0$. By \eqref{GrindEQ__5_ter}, we conclude that $\lim_{t\to +\infty } y(t,x_{0} ;d)=0$. 

\noindent \underbar{Case 4:} condition $ii)$ holds and there exists $\zeta \in \cK_{\infty }$  such that  \eqref{GrindEQ__8_ter} holds. As in Case 2, the estimate \eqref{GrindEQ__21_ter} in conjunction with \eqref{GrindEQ__8_ter} implies the estimate \eqref{GrindEQ__22_ter}. Continuing as in the proof of Case 3, we establish (using Lemma \ref{lem_barbalat} and Proposition \ref{prop3}) that $\lim_{t\to +\infty } y(t,x_{0} ;d)=0$.
\end{proof}

\section{IOS based on output dissipation}\label{sec-IOS}

The results of Section \ref{sec2} guarantee asymptotic output stability for system \eqref{GrindEQ__1_}-\eqref{GrindEQ__2_} despite the presence of disturbances. In practice, these disturbances, especially when taking \textcolor{blue}{values in an unbounded set}, can significantly affect the output behavior, influencing both the transient overshoots and steady-state errors. The IOS and practical IOS notions are particularly well suited to characterize these effects.

Unlike in Section \ref{sec2}, we do not impose that $f(0,d)=0$ for all $d\in D$, but merely that the origin is an equilibrium when the input is zero:
\begin{align*}
    f(0,0)=0.
\end{align*}
Furthermore, for this section, we assume that $D\subseteq \mathbb R^m$ is closed with $0\in D$. We retain the same assumptions on $h$ and the forward completeness of \eqref{GrindEQ__1_}–\eqref{GrindEQ__2_} from Section \ref{sec2}.
The regularity property on $f$, however, slightly differs from that in Section \ref{sec2}. Specifically, we assume that $f: 
{\mathbb R}^{n} \times D\to {\mathbb R}^{n} $  is continuous with $f(0,0)=0$, and $f(\cdot, d)$ is locally Lipschitz 
uniformly in $d$ when $d$ takes values in any compact subset of $D$, i.e., for every pair of compact sets $S\subseteq {\mathbb R}^{n}$ and $K\subseteq D$ there exists $L_{S,K}>0$ such that, for all $x,z\in S$ and all $d \in K$,
\[
\left|f(x,d)-f(z,d)\right|\le L_{S,K}\left|x-z\right|. 
\]
We recall the notions of IOS and practical IOS \cite{SONWANIOS-LYA,SONWANIOS,KAJIbook11}.

\begin{definition}[p-IOS and IOS] The system \eqref{GrindEQ__1_}-\eqref{GrindEQ__2_} is said to be \emph{practically Input-to-Output Stable (p-IOS)} if there exist a function $\beta \in \cKL$ and a non-decreasing, continuous function $\gamma :\mRp \to \mRp $ such that, for all $x_{0} \in {\mathbb R}^{n} $ and all $d\in L^{\infty } \left(\mRp ;D\right)$, its output satisfies
\begin{equation} \label{GrindEQ__38_} 
\left|y(t,x_{0} ;d)\right|\le \beta \left(\left|x_{0} \right|,t\right)+\gamma \left(\left\| d\right\| _{\infty } \right),\quad \forall t\geq 0. 
\end{equation} 
It is said to be \emph{Input-to-Output Stable (IOS)} if, in addition, $\gamma (0)=0$. The function $\gamma$ is referred to a \emph{p-IOS (or IOS) gain} for \eqref{GrindEQ__1_}-\eqref{GrindEQ__2_}.
\end{definition}

\begin{remark}Due to causality, the estimate \eqref{GrindEQ__38_} is equivalent to requesting that
\[
\left|y(t,x_{0} ;d)\right|\le \beta \left(\left|x_{0} \right|,t\right)+\gamma \left(\textrm{ess\,sup}_{\tau\in[0,t]} \left|d(\tau)\right|\right),
\] 
for all $t\ge 0$,
all $x_{0} \in {\mathbb R}^{n} $, and all $d\in L^\infty_{loc}(\mRp,D)$. 
\end{remark}

As mentioned in the introduction, a Lyapunov function that dissipates in terms of the output norm is in general insufficient to establish IOS or p-IOS, as the $\cKL$ estimate in \eqref{GrindEQ__38_} imposes a uniform convergence of the output to the ball of radius $\gamma(\|d\|_\infty)$. The following result proposes an extra condition to ensure IOS or p-IOS based on a dissipation rate that involves only output terms. 




\begin{theorem}[Lyapunov condition for p-IOS] \label{theorem-lyap-ios1} 
Suppose that there exist continuously differentiable functions $V,W:\mR^n\to\mRp$ with $W(0)=0$, $a\in \cK_{\infty } $, a continuous non-decreasing function $\chi :\mRp \to \mRp $  and a continuous positive definite function $\rho :\mRp \to \mRp$ such that, for all $x\in {\mathbb R}^{n} $ and all $d \in D$, 
\begin{align} 
a\left(\left|h(x)\right|\right)&\le W(x) \label{GrindEQ__5_quatro} \\
W(x)\ge \chi \left(\left|d\right|\right)\ \Rightarrow\ \nabla V(x)&f(x,d)\le -\rho \left(W(x)\right).\label{GrindEQ__39_} 
\end{align} 
Assume further that, for all $x\in {\mathbb R}^{n} $ and all $d \in D$, 
\begin{equation} \label{GrindEQ__40_} 
W(x)\ge \chi \left(\left|d\right|\right)\quad\Rightarrow \quad \nabla W(x)f(x,d)\le 0.
\end{equation} 
Then the system \eqref{GrindEQ__1_}-\eqref{GrindEQ__2_}  is p-IOS with gain $a^{-1}\circ \chi$. In particular, if $\chi(0)=0$ then the system \eqref{GrindEQ__1_}-\eqref{GrindEQ__2_} is IOS.
\end{theorem}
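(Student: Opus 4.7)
The plan is to mimic the Case $i)$ argument in the proof of Theorem \ref{thm_1}, the crucial difference being that \eqref{GrindEQ__39_}--\eqref{GrindEQ__40_} apply only outside the input-dependent level set $\{W(x)<\chi(|d|)\}$. Fix $d\in L^{\infty}(\mRp,D)$ and $x_{0}\in\mR^n$, and set $U_{d}:=\chi(\|d\|_\infty)$. First I would establish the Lagrange-type bound
\[
W(\phi(t,x_{0};d))\le \max\{W(x_{0}),U_{d}\}, \quad \forall t\ge 0,
\]
by an exact analog of Fact \ref{fact-1}: the auxiliary function $\tilde U(x):=\tfrac{1}{2}\left((W(x)-U_{d})^{+}\right)^{2}$ is $C^{1}$, and along any solution the chain rule gives $\nabla \tilde U(x)f(x,d(t))=(W(x)-U_{d})^{+}\,\nabla W(x)f(x,d(t))$. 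Whenever the rectification is strictly positive, $W(x)>U_{d}\ge\chi(|d(t)|)$ because $\chi$ is non-decreasing, so \eqref{GrindEQ__40_} forces $\nabla W(x)f(x,d(t))\le 0$; hence $\tilde U\circ\phi$ is non-increasing, and the bound follows.

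Next, given $R,\varepsilon>0$, I would restrict attention to $|x_{0}|\le R$ and to inputs with $U_{d}<a(\varepsilon)$, and argue by contradiction that there exists $T(\varepsilon,R)>0$, uniform over such $x_{0}$ and $d$, with $|y(t,x_{0};d)|\le\varepsilon$ for all $t\ge T(\varepsilon,R)$. The same rectification trick applied at the fixed level $a(\varepsilon)$ (legitimate because $U_{d}<a(\varepsilon)$ ensures $\nabla W f\le 0$ throughout $\{W\ge a(\varepsilon)\}$) shows that $\left((W\circ\phi)-a(\varepsilon)\right)^{+}$ is non-increasing; in particular, once $W(\phi(\tau,x_{0};d))<a(\varepsilon)$ at some $\tau$, it remains so for all later times. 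Suppose for contradiction $|y(t_{*},x_{0};d)|>\varepsilon$ at some $t_{*}\ge T(\varepsilon,R)$. Then \eqref{GrindEQ__5_quatro} gives $W(\phi(t_{*},x_{0};d))>a(\varepsilon)$, and by the monotonicity just established $W(\phi(\tau,x_{0};d))\ge a(\varepsilon)>\chi(|d(\tau)|)$ for every $\tau\in[0,t_{*}]$. Integrating \eqref{GrindEQ__39_} on this interval then yields
\[
V(\phi(t_{*},x_{0};d))\le V(x_{0})-\tilde\rho(\varepsilon,R)\,t_{*},
\]
with $\tilde\rho(\varepsilon,R):=\min\{\rho(s):s\in[a(\varepsilon),M(R)+a(\varepsilon)]\}>0$ and $M(R):=\max_{|x|\le R}W(x)$; the upper end of the interval is valid thanks to the first step, which bounds $W\circ\phi$ by $\max\{M(R),U_{d}\}\le M(R)+a(\varepsilon)$. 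Choosing $T(\varepsilon,R):=(1+\max_{|x|\le R}V(x))/\tilde\rho(\varepsilon,R)$ delivers the contradiction $V(\phi(t_{*},x_{0};d))<0$.

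Finally, the uniform bound from the first step and the uniform-time asymptotic-gain estimate from the second step are exactly the data required to construct a $\cKL$ function $\beta$ such that
\[
|y(t,x_{0};d)|\le \max\bigl\{\beta(|x_{0}|,t),\,a^{-1}(\chi(\|d\|_\infty))\bigr\}, \quad \forall t\ge 0,
\]
by a standard $\cKL$-construction identical in spirit to the one underlying the characterization recalled after Definition \ref{def-1} (namely the variants of Lemma 2.2, p.\,60 and Lemma 2.6, p.\,67 of \cite{KAJIbook11}). Dominating $\max\{a,b\}$ by $a+b$ then yields \eqref{GrindEQ__38_} with gain $\gamma:=a^{-1}\circ\chi$, and $\chi(0)=0$ trivially upgrades this to IOS. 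The main technical obstacle I anticipate is the ``no re-entry'' property used in the second step: \eqref{GrindEQ__40_} leaves $\nabla W f$ uncontrolled in the region $W<\chi(|d|)$, so one cannot argue directly that $W$ stays below $a(\varepsilon)$ once it enters that region. The resolution is to apply the squared-rectification trick with the constant level $a(\varepsilon)>U_{d}$ in place of the $t$-varying threshold $\chi(|d(t)|)$, which makes \eqref{GrindEQ__40_} available uniformly along the relevant interval.
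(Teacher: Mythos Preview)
Your proposal is correct and follows the same skeleton as the paper's proof: the squared-rectification trick applied to $W$ at level $\chi(\|d\|_\infty)$, a contradiction argument driven by the dissipation of $V$, and a $\cKL$ wrap-up. One point where you take a detour, though: the second rectification at the fixed level $a(\varepsilon)$ is unnecessary, and the restriction to inputs with $U_d<a(\varepsilon)$ that it forces leaves your asymptotic-gain estimate covering only part of the input set. The paper avoids this entirely by reusing the \emph{first} rectification: since $\tilde U\circ\phi$ is non-increasing, $W(\phi(t_*,x_0;d))>U_d$ already implies $W(\phi(\tau,x_0;d))\ge W(\phi(t_*,x_0;d))$ for all $\tau\in[0,t_*]$. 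Running the contradiction with the hypothesis $W(\phi(t_*,x_0;d))>\max\{\varepsilon,U_d\}$ (at the $W$-level rather than pulling back through $a$) then yields
\[
W(\phi(t,x_0;d))\le \max\{\varepsilon,\chi(\|d\|_\infty)\},\quad \forall t\ge T(\varepsilon,R),
\]
for \emph{all} $d\in L^\infty(\mRp;D)$, not just those with small $U_d$. This feeds directly into the $\cKL$ construction (the paper uses Lemma~15 of \cite{albertini1999continuous} on $\Phi(s,t):=\sup\{A(W(\phi(t,x_0;d)),\chi(\|d\|_\infty)):|x_0|\le s,\,d\}$ with $A(r,c)=r\mathbf{1}_{r>c}$), and no separate treatment of large-input trajectories is needed. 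Your version can be completed---e.g.\ by observing that $T(\varepsilon,R)$ may be taken non-increasing in $\varepsilon$ and then letting $\varepsilon'\downarrow\gamma(\|d\|_\infty)$ to cover the case $U_d\ge a(\varepsilon)$---but the paper's route is cleaner and your ``no re-entry'' concern is resolved by the first rectification alone.
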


Condition \eqref{GrindEQ__39_} imposes that $V$ dissipates in terms of $W(x)$ whenever the latter dominates the input norm. In view of \eqref{GrindEQ__5_quatro}, $W(x)$ can very well involve only output terms. The extra condition to conclude p-IOS or IOS is that this function $W$ does not increase along solutions, at least when it is large compared to the input norm.

Theorem \ref{theorem-lyap-ios1} generalizes the well-known Lyapunov condition for IOS \cite{SONWANIOS-LYA}, that requires \eqref{GrindEQ__5_quatro}-\eqref{GrindEQ__39_} to hold with $W=V$ (notice that, in such a case, \eqref{GrindEQ__40_} holds automatically). Theorem \ref{theorem-lyap-ios1} imposes less demanding conditions, by relaxing the requirement of having a dissipation rate involving the whole Lyapunov function $V$.

The explicit estimate of the p-IOS or IOS gain given in the above statement provides a guarantee on the maximal output steady-state error induced by the disturbance $d$. It can also prove useful for possible use with IOS small-gain theorems, such as \cite{jiang2008generalization,Karafyllis:2007kz,Karafyllis:2011fk,KAJIbook11}.

\begin{proof}[of Theorem \ref{theorem-lyap-ios1}] Define the following family of functions parameterized by $s\ge 0$:
\begin{equation} \label{GrindEQ__41_} 
U(x;s):=\frac{1}{2} \left(\left(W(x)-\chi \left(s\right)\right)^{+} \right)^{2},\quad \forall x\in\mR^n. 
\end{equation} 
Then we have that
\begin{align*}
    \nabla U(x;s)f(x,d)=\left(W(x)-\chi \left(s\right)\right)^{+} \nabla W(x)f(x,d).
\end{align*}
In view of \eqref{GrindEQ__40_}, it then holds that
\begin{equation} \label{GrindEQ__42_} 
s\ge \textcolor{teal}| d | \quad \Rightarrow\quad \nabla U(x;s)f(x,d)
\le 0.
\end{equation} 
Let arbitrary $x_{0} \in {\mathbb R}^{n} $ and \textit{$d\in L^{\infty } \left(\mRp ;D\right)$} be given. Using \eqref{GrindEQ__42_} with $s=\left\| d\right\| _{\infty } $, we get that the mapping $t\mapsto U\left(\phi (t,x_{0} ;d);\left\| d\right\| _{\infty } \right)$ is non-increasing on $\mRp$. Consequently, we obtain from \eqref{GrindEQ__41_} that, given any $\tau\geq 0$,
\begin{equation} \label{GrindEQ__43_} 
\hspace{-7pt}
W\left(\phi (t,x_{0} ;d)\right)\le \max \left\{W\left(\phi (\tau ,x_{0} ;d)\right),\chi \left(\left\| d\right\| _{\infty } \right)\right\}, 
\end{equation} 
for all $t\ge \tau$.
Since $W$ is continuous and vanishes at the origin, there exists $b\in \cK_{\infty }$ such that 
\begin{align}\label{eq-14}
W(x)\le b\left(\left|x\right|\right),\quad \forall x\in {\mathbb R}^{n}.
\end{align}
Consequently, we get from \eqref{GrindEQ__43_} with $\tau =0$ that, for all $x_{0} \in {\mathbb R}^{n} $ and all $d\in L^{\infty }\left(\mRp ;D\right)$,
\begin{equation} \label{GrindEQ__44_} 
W\left(\phi (t,x_{0} ;d)\right)\le \max \left\{b\left(\left|x_{0} \right|\right), \chi \left(\left\| d\right\| _{\infty } \right)\right\},
\end{equation} 
for all $t\ge 0$.
Let arbitrary $\varepsilon ,R>0$ be given. Let $\bar W(R):=\max \left\{W(x)\, :\, \left|x\right|\le R\right\}$ and define
\begin{align} 
\tilde{\rho }(\varepsilon ,R)&:=\inf \left\{\, \rho (s)\, :\, s\in[\varepsilon,\varepsilon +\bar W(R)]\, \right\} \label{GrindEQ__45_} \\
T(\varepsilon ,R)&:=\frac{1+\max \left\{\, V(x)\, :\,  \left|x\right|\le R\, \right\}}{\tilde{\rho }(\varepsilon ,R)}.  \label{GrindEQ__46_} 
\end{align} 
Notice that the assumption that $\rho$ is a positive definite function guarantees that $\tilde{\rho }(\varepsilon ,R)$ is positive. We next prove by contradiction that, for all $x_{0} \in {\mathbb R}^{n}$ with $\left|x_{0} \right|\le R$ and all $d\in L^{\infty } \left(\mRp ;D\right)$, 
\begin{equation} \label{GrindEQ__47_}
W\left(\phi (t,x_{0} ;d)\right)\le \max \left\{\varepsilon , \chi \left(\left\| d\right\| _{\infty } \right) \right\}, 
\end{equation}
for all $t\ge T(\varepsilon ,R)$.
To that end, suppose on the contrary that there exists $t\ge T(\varepsilon ,R)$, $d\in L^{\infty } \left(\mRp ;D\right)$ and $x_{0} \in {\mathbb R}^{n} $ with $\left|x_{0} \right|\le R$ for which $W(\phi (t,x_{0} ;d))> \max \left\{\varepsilon , \chi \left(\left\| d\right\| _{\infty } \right) \right\}$. Then we get from \eqref{GrindEQ__43_} that
\begin{equation} \label{GrindEQ__48_} 
\max \left\{\varepsilon , \chi \left(\left\| d\right\| _{\infty } \right) \right\} \le W\left(\phi (t,x_{0} ;d)\right)\le W\left(\phi (\tau ,x_{0} ;d)\right), 
\end{equation} 
for all $\tau \in \left[0,t\right]$.
Using \eqref{GrindEQ__39_}, it follows that
\begin{equation} \label{GrindEQ__49_} 
\frac{d}{d\, \tau } \left(V\left(\phi (\tau ,x_{0} ;d)\right)\right)\le -\rho \left(W\left(\phi (\tau ,x_{0} ;d)\right)\right), 
\end{equation} 
for almost all $\tau \in \left[0,t\right]$.
Since $\left|x_{0} \right|\le R$, it holds that $W\left(x_{0} \right)\le \max \left\{\, W(x)\, :\, \left|x\right|\le R\, \right\}$. Consequently, we obtain from \eqref{GrindEQ__45_}, \eqref{GrindEQ__48_} and \eqref{GrindEQ__49_} that
\begin{equation} \label{GrindEQ__50_} 
V\left(\phi (t,x_{0} ;d)\right)\le V\left(x_{0} \right)-\tilde{\rho }\left(\varepsilon ,R\right) t. 
\end{equation} 
Since $V\left(x_{0} \right)\le \max \left\{V(x)\, :\,  \left|x\right|\le R\right\}$, \eqref{GrindEQ__50_} in conjunction with our assumption that $t\ge T(\varepsilon ,R)$, defined in \eqref{GrindEQ__46_}, leads to the contradiction that $V\left(\phi (t,x_{0} ;d)\right)<0$ and \eqref{GrindEQ__47_} is proved.

Now, define the function $A:\mRp\times\mRp\to\mRp$ as
\[ 
A(r,s):=\left\{\begin{array}{cl}
r& \quad \textrm{if  }r>s \\
0 & \quad \textrm{if  }r \le s
\end{array}\right.,\quad \forall r,s\geq 0,
\]
and the function $\Phi:\mRp\times\mRp\to\mRp$ as 
\begin{align}\label{attempt1}
    \Phi(s,t) :=\sup\Bigl\{A\left(W(\phi(t,x_0;d)),\chi(\|d\|_\infty)\right)\, : \Bigr.
     \Bigl.\, d\in L^{\infty } \left(\mRp ;D\right), \, |x_0|\leq s \Bigr\}, \quad \forall s,t\geq 0.
\end{align}
Let arbitrary $d\in L^{\infty } \left(\mRp ;D\right)$, $x_{0} \in \mathbb R^{n}$ and $t \geq 0$ be given. Notice that if $W(\phi(t,x_0;d))>\chi(\|d\|_\infty)$ then we get from \eqref{GrindEQ__44_} that 
\[
A\left(W(\phi(t,x_0;d)),\chi(\|d\|_\infty)\right)=W(\phi(t,x_0;d))\le b(|x_0|).
\]
On the other hand, if $W(\phi(t,x_0;d)) \le \chi(\|d\|_\infty))$ then
\[ A\left(W(\phi(t,x_0;d)),\chi(\|d\|_\infty)\right)=0 \]
Thus, it holds for all $d\in L^{\infty } \left(\mRp ;D\right)$ and  $x_{0} \in {\mathbb R}^{n}$ that
\[ A\left(W(\phi(t,x_0;d)),\chi(\|d\|_\infty)\right) \le b(|x_0|),\quad \forall t\geq 0. \]
Consequently, definition \eqref{attempt1} gives 
\begin{align}\label{attempt2}
    \Phi(s,t) \le b(s),\quad \forall s,t \geq 0.
\end{align}
Let arbitrary $\varepsilon ,R>0$ be given and consider any $d\in L^{\infty }(\mRp,D)$ and any $x_{0} \in {\mathbb R}^{n}$ with $|x_0|\leq R$. If $W(\phi(t,x_0;d))>\chi(\|d\|_\infty)$ then we get from \eqref{GrindEQ__47_} that 
\[ 
A\left(W(\phi(t,x_0;d)),\chi(\|d\|_\infty)\right)=W(\phi(t,x_0;d))\le \varepsilon,
\]
for all $t\ge T(\varepsilon ,R)$.
On the other hand, if $W(\phi(t,x_0;d)) \le \chi(\|d\|_\infty)$ then we have that 
\[ A\left(W(\phi(t,x_0;d)),\chi(\|d\|_\infty)\right)=0,\quad \forall t\ge T(\varepsilon ,R).  
\]
Combining these two cases, we conclude from \eqref{attempt1} that 
\begin{align*}
    \Phi(R,t) \le \varepsilon,\quad  \forall t\ge T(\varepsilon ,R).
\end{align*}
Since definition \eqref{attempt1} implies that $s\mapsto \Phi(s,t)$ is non-decreasing for any fixed $t\geq 0$, it follows that
\begin{align}\label{attempt4}
    \Phi(s,t) \le \varepsilon,\quad \forall t\ge T(\varepsilon ,R),\,s\in [0,R].
\end{align}
Inequalities \eqref{attempt2} and \eqref{attempt4} show that $\Phi$ fulfills the assumptions of \cite[Lemma 15]{albertini1999continuous}. We conclude that there exits $\tilde\beta\in\cKL$ such that
\begin{align}\label{attempt5}
    \Phi(s,t)\leq \tilde\beta(s,t),\quad \forall s,t\geq 0.
\end{align}
Finally, let arbitrary $d\in L^{\infty }(\mRp,D)$, $x_{0} \in {\mathbb R}^{n}$ and $t\geq 0$ be given. If $W(\phi(t,x_0;d))>\chi(\|d\|_\infty)$ then we get from definition \eqref{attempt1} that $W(\phi(t,x_0;d)) \le \Phi(|x_0|,t)$ and  \eqref{attempt5} ensures that
\begin{align*} 
    W(\phi(t,x_0;d)) \leq \tilde\beta(|x_0|,t).
\end{align*}
Hence, for every $d\in L^{\infty }(\mRp,D)$, $x_{0} \in {\mathbb R}^{n}$ and $t \geq 0$,
\begin{align*} 
    W(\phi(t,x_0;d)) \leq \max \left\{ \tilde\beta(|x_0|,t), \chi \left(\left\| d\right\| _{\infty } \right)\right\} .
\end{align*}
In other words, the system \eqref{GrindEQ__1_} with output $W(x)$ is p-IOS with gain $\chi$.
Using \eqref{GrindEQ__5_}, it follows that 
\begin{align*}
   |y(t,x_0;d)|&\leq \max\left\{a^{-1}\circ\tilde\beta(|x_0|,t)\,,\,a^{-1}\circ\chi(\|d\|_\infty)\right\} \\
   &\leq a^{-1}\circ\tilde\beta(|x_0|,t)+a^{-1}\circ\chi(\|d\|_\infty),
\end{align*}
for all $t\ge 0$. This concludes the proof.
\end{proof}

\vspace{3mm}
The next example shows that Theorem \ref{theorem-lyap-ios1} can indeed prove more flexible than existing Lyapunov conditions for IOS (\cite{SONWANIOS-LYA} or \cite[Section 4.7.1]{KAJIbook11}).

\begin{example}[IOS through Theorem \ref{theorem-lyap-ios1}]  \label{ex_chap_A10_101}
Consider the bi-dimensional system
\begin{subequations}\label{eq_chap_A10_ex_1}
\begin{align}
\dot x_1 &= \frac{x_1}{1+|x_1|}\\
\dot x_2 &= \frac{-x_2 + d}{\sqrt{1+ x_1^2}}, 
\end{align}
\end{subequations}
with output $y=x_2$. Consider the continuously differentiable function defined as $V(x) := \frac{1}{2}x_2^2\sqrt{1+x_1^2}$ for all $x=(x_1,x_2)^\top\in\mR^2$. By letting $f(x,d)$ denote the right-hand side of \eqref{eq_chap_A10_ex_1}, it holds that 
\begin{align*}
\frac{\partial V}{\partial x}(x)f(x, d) &= \frac12 x_2^2\frac{x_1}{\sqrt{1+x_1^2}}\frac{x_1}{1+|x_1|}-x_2^2+x_2d\\
&\le \frac12x_2^2 - x_2^2 +x_2d
 \le -\frac{x_2^2}{4}+d^2,
\end{align*}
where we used the inequality $x_2 d\leq  d^2+x_2^2/4$. Notice that the dissipation rate of $V$ involves only the output $x_2$. Considering the continuously differentiable function $W(x):= x_2^2$, it holds that
\begin{align*}
    W(x)\ge 8d^2 \quad \Rightarrow\quad \frac{\partial V}{\partial x}(x)f(x, d) \le -\frac{1}{8}W(x).
\end{align*}
The derivative of $W$ along the system's solutions reads
\begin{align*}
    \frac{\partial W}{\partial x}(x)f(x, d)=\frac{2(-x_2^2 + x_2d)}{\sqrt{1+x_1^2}},
\end{align*}
from which we get the following relation:
\begin{align*}
    W(x)\ge 8 d^2 \quad \Rightarrow\quad \frac{\partial W}{\partial x}(x)f(x, d) \le 0.
\end{align*}
IOS with linear gain then follows from Theorem \ref{theorem-lyap-ios1}.

In \cite{SONWANIOS-LYA} and \cite[Section 4.7.1]{KAJIbook11}, IOS was shown to hold under the assumption of a dissipation rate involving the Lyapunov function itself, namely
\begin{align}
\underline\alpha(|h(x)|)&\leq \tilde V(x)\leq \overline\alpha(|x|)\label{eq-5}\\
\tilde V(x)\geq \chi(|d|)\quad &\Rightarrow\quad\frac{\partial \tilde V}{\partial x}(x)f(x,d)\leq -\alpha(\textcolor{blue}{\tilde V}(x)),\nonumber
\end{align}
with $\underline\alpha,\overline\alpha,\chi\in\cK_\infty$ and a continuous, positive definite function $\alpha:\mRp\to\mRp$. Finding such a function $\tilde V$ for this example does not seem straightforward due to the presence of the $\frac{1}{\sqrt{1+x_1^2}}$ term in \eqref{eq_chap_A10_ex_1}.

Finally, it is worth mentioning that, as shown in \cite{SONWANIOS-LYA}, IOS guarantees the existence of a function $\bar V$ satisfying a sandwich condition like in \eqref{eq-5} and 
\begin{align}\label{eq-6}
\hspace{-8pt}
\bar V(x)\geq \chi(|d|)\ &\Rightarrow\ \frac{\partial \bar V}{\partial x}(x)f(x,d)\leq -\sigma(\bar V(x),|x|),
\end{align}
with $\chi\in\cK_\infty$ and $\sigma\in\cKL$. For this example, the function $\bar V(x):=x_2^2$ is such a function. Nevertheless, \eqref{eq-5}-\eqref{eq-6} were shown to ensure IOS only if the so-called UBIBS property holds, namely 
\begin{align*}
|x(t;x_0,d)|\leq \max\{\eta_1(|x_0|)\,,\,\eta_2(\|d\|_\infty)\},\quad \forall t\geq 0,
\end{align*}
with $\eta_1,\eta_2\in\cK_\infty$. This condition is clearly not fulfilled here as $|x_1(t;x_0,d)|$ tends to infinity when $x_1(0)\not=0$. 
\end{example}

\section{Application to robust adaptive control}\label{sec-adaptive}

\noindent In this section we illustrate how the results that were given above can be applied to adaptive control. For simplicity, we focus on the following scalar system

\begin{equation} \label{GrindEQ__52_} 
\dot{y}=\theta y\varphi (y)+u+d, 
\end{equation}

\noindent where $y\in {\mathbb R}$ is the plant state, $u\in {\mathbb R}$ is the control input, $\theta \in {\mathbb R}$ is an unknown constant parameter, $d\in {\mathbb R}$ is an unknown disturbance and $\varphi $ is a smooth function. For the scalar system \eqref{GrindEQ__36_}, the book \cite{KAKRbook25} provides a simple, novel direct adaptive control scheme that combines two elements: a nonlinear damping with dynamic gain and a Lyapunov-based deadzone in the update law. This adaptive scheme is called Deadzone-Adapted Disturbance Suppression (DADS) control and is given by
\begin{align}
\hspace{-7pt}
u=-\frac{c}{2} y-\frac{1+e^z}{2} \left(\frac{1+\varphi ^{2} (y)y^{2} }{2a} +c+\frac{\varphi ^{2} (y)}{c} \right)y,  \label{GrindEQ__54_} 
\end{align}
where $z\in\mR$ is a dynamic gain driven by
\begin{align} 
\dot{z}&=\Gamma e^{-z}\left(\frac{1}{2} y^{2} -\varepsilon \right)^{+}.
\label{GrindEQ__53_}
 \end{align}
In this control expression, $\Gamma ,\varepsilon ,c,a>0$ are all positive tuning gains. The deadzone-adapted terminology comes from the term $\left(\frac{1}{2} y^{2} -\varepsilon \right)^{+}$ in \eqref{GrindEQ__53_}, which makes $z$ constant whenever the output norm is below $\sqrt{2\varepsilon}$. 

It is shown in \cite{KAKRbook25} that the DADS adaptive feedback law \eqref{GrindEQ__54_}-\eqref{GrindEQ__53_} guarantees that, for any $\theta \in {\mathbb R}$, any $d\in L^{\infty } \left(\mRp,\mR\right)$, and any initial values $y(0),z(0)\in {\mathbb R}$, the solution of \eqref{GrindEQ__52_} satisfies :
\begin{itemize}
    \item[$i)$] the practical IOS property:
\begin{equation*} 
\left|y(t)\right|\le \left|y(0)\right|e^{-ct/2}+\sqrt{\frac{2a}{c} } \left\| d\right\| _{\infty } +\sqrt{\frac{2a}{c} } \left(\left|\theta \right|-1\right)^{+}, 
\end{equation*} 
\item[$ii)$] the no parameter-drift property:
\begin{equation} \label{GrindEQ__56_} 
z(0)\le z(t)\le \Omega \left(\left\| d\right\| _{\infty } ,\left|\theta \right|,\left|y(0)\right|,e^{z(0)}\right), 
\end{equation} 
for some continuous function $\Omega :\textcolor{blue}{\mRp^3\times \mR_{>0}}\to {\mathbb R}$.
\item[$iii)$]the following output asymptotic gain property:
\begin{align*} 
\limsup_{t\to +\infty } \frac{\left|y(t)\right|^{2} }{2} \le \min \left\{\varepsilon ,a\frac{ d _{\infty }^2 \hspace{-1mm}+\hspace{-1mm}\left(\left(\left|\theta \right|-1-e^{z_\infty}\right)^{+} \right)^{2} }{c\left(1+e^{z_\infty}\right)} \right\},
\end{align*}
where $
d_\infty:=\limsup_{t\to+\infty}{|d(t)|}$,  $z_\infty:=\lim_{t\to+\infty} z(t)$.
\item[$iv)$] the regulation property that at least one of the following holds when $\lim_{t\to +\infty } d(t)=0$ (vanishing disturbance): 
\begin{itemize}
    \item[a)] $\lim_{t\to +\infty} y(t)=0$
    \item[b)] $\left|\theta \right|>1$ and $z(t)\le z_\infty <\ln \left(\left|\theta \right|-1\right)$ for all $t\ge 0$.
\end{itemize}
\end{itemize}

\noindent Property $iii)$ guarantees in particular that the maximal steady-state error for the output $y$ is $\sqrt{2\varepsilon}$, which is independent of the disturbance magnitude $\|d\|_\infty$ and can be reduced at will by picking $\varepsilon$ sufficiently small. 

We next deepen this analysis by considering the alternative output $\tilde{y}:=\left(\left|y\right|-\sqrt{2\varepsilon } \right)^{+} $. More precisely, we address the following two questions:
\begin{enumerate}
\item  In the absence of disturbances ($d\equiv 0$), for what values of the unknown parameter $\theta$ does the system \eqref{GrindEQ__52_} in closed loop with \eqref{GrindEQ__54_}-\eqref{GrindEQ__53_} satisfy the UGAOS property for the output $\tilde{y}$?

\item  For what values of $\theta$ does this closed-loop system satisfy the IOS property for the output $\tilde{y}$?
\end{enumerate}

\noindent To this purpose, we use the following functions:
\begin{align} 
V(y,z)&:=\frac{1}{2}\left(\left(\frac{1}{2} y^{2} -\varepsilon \right)^{+}\right)^{2} +\frac{a}{3\Gamma } \left(\left(\left|\theta \right|-1-e^z\right)^{+} \right)^{3} \label{GrindEQ__59_} \\
W(y)&:=\frac{1}{2} \left(\left(\frac{1}{2} y^{2} -\varepsilon \right)^{+} \right)^{2}.  \label{GrindEQ__60_} 
\end{align} 
These functions satisfy the following relationship:
\begin{equation} \label{GrindEQ__63_} 
V(y,z)\ge W(y)\ge \frac{1}{8} \left|\tilde{y}\right|^{4}.  
\end{equation} 
Moreover, for all $\theta \in {\mathbb R}$ and all $y,z,d\in {\mathbb R}$, the derivative of $V$ along the solutions of the closed-loop system \eqref{GrindEQ__52_}-\eqref{GrindEQ__53_} reads
\begin{align} 
\dot{V}(y,z,d) &=\left(\frac{1}{2} y^{2} -\varepsilon \right)^{+} y^{2} \left(\theta \varphi (y)-\frac{c}{2} \right) \nonumber\\ 
&-\left(\frac{1}{2} y^{2} -\varepsilon \right)^{+} y^2\frac{1+e^z}{2} \left(\frac{1+\varphi ^{2} (y)y^{2} }{2a} +c+\frac{\varphi ^{2} (y)}{c} \right) \label{GrindEQ__61_} \\ &+\left(\frac{1}{2} y^{2} -\varepsilon \right)^{+} \hspace{-2mm}yd-a\left(\left(\left|\theta \right|-1-e^z\right)^{+} \right)^{2} \left(\frac{1}{2} y^{2} -\varepsilon \right)^{+}\hspace{-2mm}. \nonumber
\end{align}
Similarly, for the function $W$:
\begin{align}
\dot{W}(y,z,d) =&\left(\frac{1}{2} y^{2} -\varepsilon \right)^{+} \hspace{-2mm}\left(y^{2} \left(\theta \varphi (y)-\frac{c}{2} \right)+yd\right)\label{GrindEQ__62_}  \\ &-\left(\frac{1}{2} y^{2} -\varepsilon \right)^{+} \hspace{-2mm}y^{2} \frac{1+e^z}{2} \left(\frac{1+\varphi ^{2} (y)y^{2} }{2a} +c+\frac{\varphi ^{2} (y)}{c} \right). \nonumber
\end{align} 
Observe that the following inequalities hold:
\begin{align*}
    yd&\le ad^{2} +\frac{1}{4a} y^{2}\\
    y^{2} \left(\frac{1}{2} y^{2} -\varepsilon \right)^{+} &\ge 2\left(\left(\frac{1}{2} y^{2} -\varepsilon \right)^{+} \right)^{2} =4W(y)
\end{align*}
and that
\begin{align*}
    \theta y^{2} \varphi (y)&\le \left|\theta \right|y^{2} \left|\varphi (y)\right|\\
    &\le \left(\left|\theta \right|-1-e^z\right)^{+} y^{2} \left|\varphi (y)\right|+\left(1+e^z\right)y^{2} \left|\varphi (y)\right| \\ 
    &\le a\left(\left(\left|\theta \right|-1-e^z\right)^{+} \right)^{2} +\frac{\varphi ^{2} (y)}{4a} y^{4}+\frac{c}{2} y^{2} \left(1+e^z\right)
    +\left(1+e^z\right)y^{2} \frac{\varphi ^{2} (y)}{2c}. 
\end{align*}
Hence, we obtain from \eqref{GrindEQ__61_}-\eqref{GrindEQ__62_} that, for all $\theta \in {\mathbb R}$ and all $y,z,d\in {\mathbb R}$,
\begin{align} 
\dot{V}(y,z,d)&\le -\left(\frac{1}{2} y^{2} -\varepsilon \right)^{+}\hspace{-2mm}\left(\frac{c}{2} y^{2} - ad^{2}\right), \label{GrindEQ__64_} 
\\
\dot{V}(y,z,0)&\le -2cW(y)  \label{GrindEQ__65_} 
\\
\dot{W}(y,z,d)&\le \left(\frac{1}{2} y^{2} -\varepsilon \right)^{+} \hspace{-2mm}ad^{2}+\left(\frac{1}{2} y^{2} -\varepsilon \right)^{+}\hspace{-2mm} y^{2} \left(\theta \varphi (y)-c-\frac{cy^{2} +2a}{4ac} \varphi ^{2} (y)\right)\label{GrindEQ__66_}\\
\dot{W}(y,z,0)&\le \frac{1}{2} \left(\frac{1}{2} y^{2} -\varepsilon \right)^{+}  \hspace{-2mm} y^{2} \left(2\theta \varphi (y)-2c-\frac{1+\varphi ^{2} (y)y^{2} }{2a} -\frac{\varphi ^{2} (y)}{c} \right).  \label{GrindEQ__67_}
\end{align} 
In view of inequalities \eqref{GrindEQ__63_}, \eqref{GrindEQ__65_} and \eqref{GrindEQ__67_},
Theorem \ref{thm_1} shows that the disturbance-free closed-loop system \eqref{GrindEQ__52_}-\eqref{GrindEQ__53_} with output $\tilde{y}=\left(\left|y\right|-\sqrt{2\varepsilon } \right)^{+} $ is UGOAS 
provided that there exists a constant $r>0$ such that
\begin{equation} \label{GrindEQ__68_}
\theta \varphi (y)\le c+\frac{1+\varphi ^{2} (y)y^{2} }{4a} +\frac{\varphi ^{2} (y)}{2c}, 
\end{equation} 
for all $|y|\in\left[\sqrt{2\varepsilon},\sqrt{2\left(\varepsilon +r\right)}\right]$. 
Thus, we have shown the following.

\begin{proposition}[Disturbance-free system]\label{prop7}
    If there exists $r>0$ such that \eqref{GrindEQ__68_} holds for all $|y|\in\left[\sqrt{2\varepsilon},\sqrt{2\left(\varepsilon +r\right)}\right]$, then the closed-loop system \eqref{GrindEQ__52_}-\eqref{GrindEQ__53_} with output $\tilde{y}=\left(\left|y\right|-\sqrt{2\varepsilon } \right)^{+} $ satisfies the UGAOS property in the absence of disturbances.
\end{proposition}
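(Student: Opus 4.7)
The plan is to invoke Theorem~\ref{thm_1} under its condition $i)$, with the auxiliary functions $V$ and $W$ that the preceding computation has already prepared. I would view the disturbance-free closed loop as an autonomous planar system with state $x=(y,z)\in\mR^2$ and output $\tilde y = h(x) = (|y|-\sqrt{2\varepsilon})^+$. First I would verify the structural prerequisites: the maps $s\mapsto((s)^+)^k$ are $C^1$ for every $k\ge 2$, so both $V$ in~\eqref{GrindEQ__59_} and $W$ in~\eqref{GrindEQ__60_} are $C^1$ and nonnegative; forward completeness of the closed loop is granted by the results of~\cite{KAKRbook25} recalled above; and $f(0,0)=0$, so the origin of $(y,z)$-space is indeed an equilibrium of the disturbance-free closed loop.

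Next I would identify the ingredients of Theorem~\ref{thm_1}. The estimate~\eqref{GrindEQ__63_} supplies~\eqref{GrindEQ__5_bis} globally with $a(s):=s^4/8\in\cK$, while~\eqref{GrindEQ__65_} provides the dissipation~\eqref{GrindEQ__7_} with $\rho(s):=2cs$, which is continuous, positive definite and satisfies $\liminf_{s\to+\infty}\rho(s)=+\infty$; this is exactly condition $i)$ of Theorem~\ref{thm_1}.

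The main substantive step is to recast the hypothesis~\eqref{GrindEQ__68_} into the monotonicity condition~\eqref{GrindEQ__6_bis} on a sublevel set of $W$. A direct algebraic inspection of~\eqref{GrindEQ__60_} gives $W(y)<r^2/2$ if and only if $|y|<\sqrt{2(\varepsilon+r)}$, so I would take $r_W:=r^2/2$ as the threshold in the application of Theorem~\ref{thm_1}. On the sub-range $|y|\le\sqrt{2\varepsilon}$ the rectifier $(y^2/2-\varepsilon)^+$ in~\eqref{GrindEQ__67_} vanishes, so $\dot W(y,z,0)=0$; on $|y|\in(\sqrt{2\varepsilon},\sqrt{2(\varepsilon+r)})$, the postulated inequality~\eqref{GrindEQ__68_} makes the bracketed factor in~\eqref{GrindEQ__67_} nonpositive, hence $\dot W(y,z,0)\le 0$ throughout $\{W<r_W\}$.

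With every hypothesis of Theorem~\ref{thm_1} case $i)$ in place, I obtain UGOA and Lyapunov output stability of the disturbance-free closed loop. To strengthen UGOA into UGAOS I would invoke Proposition~\ref{prop-2}: Assumption~\ref{ass-1} is automatic here because $D=\{0\}$ is compact and the closed-loop vector field is locally Lipschitz in $(y,z)$ (smooth $\varphi$, smooth $e^{\pm z}$, and globally Lipschitz $(\cdot)^+$). The only delicate step will be the bookkeeping translation between the $|y|$-range on which~\eqref{GrindEQ__68_} is postulated and the $W$-sublevel set required in Theorem~\ref{thm_1}; once that identification is made, everything else reduces to a direct citation of prior material.
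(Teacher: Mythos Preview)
Your proposal is correct and follows essentially the same route as the paper: apply Theorem~\ref{thm_1} under condition~$i)$ with the functions $V,W$ of \eqref{GrindEQ__59_}--\eqref{GrindEQ__60_}, reading off \eqref{GrindEQ__5_bis} from \eqref{GrindEQ__63_}, \eqref{GrindEQ__7_} from \eqref{GrindEQ__65_} with $\rho(s)=2cs$, and \eqref{GrindEQ__6_bis} from \eqref{GrindEQ__67_} together with the hypothesis \eqref{GrindEQ__68_}. You additionally spell out details the paper leaves implicit (the translation $W(y)<r^2/2\Leftrightarrow|y|<\sqrt{2(\varepsilon+r)}$, forward completeness, and the use of Assumption~\ref{ass-1}/Proposition~\ref{prop-2} to upgrade UGOA to UGAOS), which is fine.
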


Observing that $2\theta \varphi (y)\le \frac{ac}{a+\varepsilon c} \theta ^{2} +\frac{a+\varepsilon c}{ac} \varphi ^{2} (y)$, condition \eqref{GrindEQ__68_} holds automatically when 
\begin{align*}
 \left|\theta \right|\le \sqrt{2+\frac{2\varepsilon c}{a} +\frac{1}{2ac} +\frac{\varepsilon }{2a^{2} } }.   
\end{align*}
However, \eqref{GrindEQ__68_} can hold for many other values of $\theta \in {\mathbb R}$ depending on each specific function $\varphi$. For example, when $\varphi \left(\cdot\right)\equiv 1$ then \eqref{GrindEQ__68_} holds for all $\theta \le c+\frac{1+2\varepsilon }{4a} +\frac{1}{2c} $. It is also worth stressing that, by continuity, \textcolor{blue}{the assumption of Proposition \ref{prop7}} is fulfilled if
\[2\theta \varphi (y)<2c+\frac{1+\varphi ^{2} (y)y^{2} }{2a} +\frac{\varphi ^{2} (y)}{c} , \ \textrm{for} \ y=\pm \sqrt{2\varepsilon }. \]

We now address the second question and consider non-zero disturbances. From \eqref{GrindEQ__64_}, \eqref{GrindEQ__66_} and definition \eqref{GrindEQ__60_} we have that, for every $\lambda \in \left(0,1\right)$ and whenever $W(y)\ge \frac{a^{2} }{2c^{2} \left(1-\lambda \right)^{2} } \left|d\right|^{4}$, the following two bounds hold:
\begin{align} \label{GrindEQ__69_} 
\dot{V}(y,z,d)&\le -\frac{c}{2} \lambda y^{2} \left(\frac{1}{2} y^{2} -\varepsilon \right)^{+} \hspace{-2mm}\le -2c\lambda W(y) \\
\dot{W}(y,z,d)&\le \left(\frac{1}{2} y^{2} -\varepsilon \right)^{+} 
\hspace{-1mm}\Bigl[\theta y^{2} \varphi (y) -cy^2 \Bigr.\label{GrindEQ__70_}\\
& \Bigl. + \, c(1-\lambda)\left(\frac{1}{2} y^{2} -\varepsilon \right)^{+}\hspace{-2mm} 
- \frac{cy^{2} +2a}{4ac} y^{2} \varphi ^{2} (y)\Bigr].\nonumber 
\end{align}
By Theorem \ref{theorem-lyap-ios1}, these two implications lead to the following result.
\begin{proposition}[Disturbed system]\label{prop8}
    If there exists $\lambda \in \left(0,1\right)$ such that
\begin{equation} \label{GrindEQ__71_}
2\theta \varphi (y)\le \left(1+\lambda \right)c+\frac{cy^{2} +2a}{2ac} \varphi ^{2} (y)+\frac{2c\left(1-\lambda \right)\varepsilon }{y^{2} }  , 
\end{equation}
for all $|y|\geq \sqrt{2\varepsilon}$,
then the closed-loop system \eqref{GrindEQ__52_}-\eqref{GrindEQ__53_} with output $\tilde{y}=\left(\left|y\right|-\sqrt{2\varepsilon } \right)^{+} $ satisfies the IOS property.
\end{proposition}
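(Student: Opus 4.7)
The plan is to apply Theorem \ref{theorem-lyap-ios1} directly, using the same $V$ and $W$ from \eqref{GrindEQ__59_}-\eqref{GrindEQ__60_} and viewing the closed-loop system as a dynamical system in the state $(y,z)\in\mR^2$ with disturbance $d\in\mR$ and output $\tilde y=(|y|-\sqrt{2\varepsilon})^+$. Note that $h(0,0)=0$, $W(0)=0$, and \eqref{GrindEQ__63_} yields the sandwich $\alpha(|\tilde y|)\le W(y)$ with $\alpha(s):=s^4/8\in\cK_\infty$ (I avoid the notation $a$ of Theorem \ref{theorem-lyap-ios1} to prevent clash with the tuning gain $a$ in the controller). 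Forward completeness of the closed loop is already guaranteed by the DADS analysis recalled earlier in the section, so this prerequisite of Theorem \ref{theorem-lyap-ios1} is met.

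I then plan to pick the IOS gain $\chi(s):=\frac{a^2}{2c^2(1-\lambda)^2}s^4$ and the positive definite rate $\rho(s):=2c\lambda s$. The implication on $\dot V$ in Theorem \ref{theorem-lyap-ios1} comes for free from \eqref{GrindEQ__69_}: the condition $W(y)\ge \chi(|d|)$ is precisely $W(y)\ge \frac{a^2}{2c^2(1-\lambda)^2}|d|^4$, under which \eqref{GrindEQ__69_} gives $\dot V\le -2c\lambda W(y)=-\rho(W(y))$.

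The remaining (and central) step is to derive from \eqref{GrindEQ__71_} the implication $W(y)\ge \chi(|d|)\Rightarrow \dot W\le 0$. I would argue as follows: when $|y|<\sqrt{2\varepsilon}$, the prefactor $(\tfrac{1}{2}y^2-\varepsilon)^+$ in \eqref{GrindEQ__70_} vanishes and $\dot W=0$ trivially. When $|y|\ge\sqrt{2\varepsilon}$, multiplying \eqref{GrindEQ__71_} by $y^2/2$ and rearranging yields
\begin{align*}
\theta y^2\varphi(y)-cy^2-\tfrac{cy^2+2a}{4ac}y^2\varphi^2(y)\le -c(1-\lambda)\bigl(\tfrac{y^2}{2}-\varepsilon\bigr),
\end{align*}
and since $\tfrac{y^2}{2}-\varepsilon=(\tfrac{y^2}{2}-\varepsilon)^+$ on this range, the bracket in \eqref{GrindEQ__70_} is bounded above by $-c(1-\lambda)(\tfrac{y^2}{2}-\varepsilon)^++c(1-\lambda)(\tfrac{y^2}{2}-\varepsilon)^+=0$, proving $\dot W\le 0$. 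Note that this bound holds for every $d\in\mR$, so a fortiori whenever $W(y)\ge\chi(|d|)$, which matches the hypothesis of Theorem \ref{theorem-lyap-ios1}.

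All hypotheses of Theorem \ref{theorem-lyap-ios1} being met, the closed-loop system with output $\tilde y$ is p-IOS with gain $\alpha^{-1}\circ\chi$; since $\chi(0)=0$, this is in fact IOS, which concludes the proof. I expect the only non-routine step to be the algebraic rearrangement linking \eqref{GrindEQ__71_} to the sign of the bracket in \eqref{GrindEQ__70_}; once the factor $y^2/2-\varepsilon$ is recognized as the positive part for $|y|\ge\sqrt{2\varepsilon}$, the cancellation between the two $c(1-\lambda)(\tfrac12 y^2-\varepsilon)^+$ contributions is immediate.
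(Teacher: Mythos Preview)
Your proposal is correct and follows exactly the paper's route: the same $V$, $W$, the same $\chi(s)=\frac{a^2}{2c^2(1-\lambda)^2}s^4$ and $\rho(s)=2c\lambda s$, the same use of \eqref{GrindEQ__69_}--\eqref{GrindEQ__70_}, and the same algebraic rearrangement of \eqref{GrindEQ__71_} to make the bracket in \eqref{GrindEQ__70_} nonpositive before invoking Theorem~\ref{theorem-lyap-ios1}. One small caveat: your remark that ``this bound holds for every $d\in\mR$'' is not quite right, since \eqref{GrindEQ__70_} itself was obtained only under the hypothesis $W(y)\ge\chi(|d|)$ (the $ad^2$ term in \eqref{GrindEQ__66_} was absorbed using precisely that condition); but since all you actually need is the implication $W(y)\ge\chi(|d|)\Rightarrow\dot W\le 0$, which you do establish, this slip is harmless.
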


Observing that $2\theta \varphi (y)\le \frac{ac}{\varepsilon c+a} \theta ^{2} +\frac{\varepsilon c+a}{ac} \varphi ^{2} (y)$, condition \eqref{GrindEQ__71_} holds automatically when 
\begin{align*}
    \left|\theta \right|< \sqrt{1+\frac{\varepsilon c}{a} }.
\end{align*}
However, here again, condition \eqref{GrindEQ__71_} can hold for other values of $\theta$, depending on the considered function $\varphi$. For instance, when $\varphi \left(\cdot\right)\equiv 1$, \textcolor{blue}{the assumption of Proposition \ref{prop8} is fulfilled for all} $\theta <c+\frac{1}{2c} $.

IOS being a much stronger property than UGAOS of the corresponding disturbance-free system, it is not surprising that condition  \eqref{GrindEQ__71_} is more demanding than \eqref{GrindEQ__68_}.

\section{Conclusion and perspectives}

We have presented some Lyapunov-based conditions to ensure both URGAOS and IOS based on a dissipation rate that may involve only output terms. We have illustrated the interest of the proposed approach through an academic example and by relying on the recent DADS adaptive control technique. We have also proposed a Lyapunov-based result to establish (non-uniform) asymptotic output stability for systems with inputs, \textcolor{blue}{based on a recent relaxation of Barb\u{a}lat's lemma.}

A point that would deserve clarification is whether condition $ii)$ in Theorem \ref{thm-2} could be imposed only for small values of $W(x)$ (as in condition $i)$): we do not know whether the present condition, requested for all $x$, is really needed or whether it is just a proof artifact. 

More importantly, the results are developed only in the context of finite-dimensional systems. The results in \cite[Section 4]{KACH20} suggest that the proof techniques employed here could fit a more abstract framework, covering time-delay systems and partial differential equations.

\bibliographystyle{plain}
\bibliography{refs}

\newcommand{\SortNoop}[1]{} 
  \def\nesic{Ne\v{s}i\'{c}\,} %
  \def\astrom{{\SortNoop{As}\AA}str{\"{o}}m\,}\let\c=\cedille
\begin{thebibliography}{10}

\bibitem{albertini1999continuous}
F.~Albertini and E.D. Sontag.
\newblock Continuous control-{Lyapunov} functions for asymptotically
  controllable time-varying systems.
\newblock {\em International Journal of Control}, 72(18):1630--1641, 1999.

\bibitem{ANGSON-UFC}
D.~Angeli and E.D. Sontag.
\newblock Forward completeness, unboundedness observability, and their
  {Lyapunov} characterizations.
\newblock {\em Systems {\rm \&} Control Letters}, 38:209--217, 1999.

\bibitem{BARO01}
A.~Bacciotti and L.~Rosier.
\newblock {\em {Liapunov} Functions and Stability in Control Theory}.
\newblock Springer, 2001.

\bibitem{jiang2008generalization}
Z.P. Jiang and Y.~Wang.
\newblock A generalization of the nonlinear small-gain theorem for large-scale
  complex systems.
\newblock In {\em 7th World Congress on Intelligent Control and Automation},
  pages 1188--1193. IEEE, 2008.

\bibitem{KACH20}
I.~Karafyllis and A.~Chaillet.
\newblock {Lyapunov conditions for uniform asymptotic output stability and a
  relaxation of Barb\u{a}lat's lemma}.
\newblock {\em Automatica}, 132:109792, 2021.

\bibitem{Karafyllis:2007kz}
I.~Karafyllis and Z.P. Jiang.
\newblock {A small-gain theorem for a wide class of feedback systems with
  control applications}.
\newblock {\em SIAM Journal on Control and Optimization}, 46(4):1483--1517,
  2007.

\bibitem{Karafyllis:2011fk}
I.~Karafyllis and Z.P. Jiang.
\newblock {A vector small-gain theorem for general non-linear control systems}.
\newblock {\em IMA J. Math. Control \& Information}, 28(3):309--344, 2011.

\bibitem{KAJIbook11}
I.~Karafyllis and Z.P. Jiang.
\newblock {\em Stability and stabilization of nonlinear systems}.
\newblock Communications and Control Engineering Series. {Springer-Verlag},
  {London}, 2011.

\bibitem{KAKRbook25}
I.~Karafyllis and M.~Krstic.
\newblock {\em {Robust Adaptive Control: Deadzone-Adapted Disturbance
  Suppression}}.
\newblock SIAM, 2025.

\bibitem{KHALIL2002}
H.~Khalil.
\newblock {\em Nonlinear systems}.
\newblock {Prentice Hall, 3rd ed.}, New York, 2002.

\bibitem{LAKLEE}
V.~Lakshmikantham and S.~Leela.
\newblock {\em Differential and integral inequalities}, volume~1.
\newblock Academic Press, {New York}, 1969.

\bibitem{MironchenkoBook}
A.~Mironchenko.
\newblock {\em Input-to-state stability: theory and applications}.
\newblock {Springer Cham}, 2024.

\bibitem{Movchan60}
A.~A. Movchan.
\newblock Stability of processes with respsect to two measures.
\newblock {\em Prikl. Mat. Mekh.}, 24:988--1001, 1960.

\bibitem{ORCHSI20}
J.~Or{\l}owski, A.~Chaillet, and M.~Sigalotti.
\newblock {Counterexample to a Lyapunov condition for uniform asymptotic
  partial stability}.
\newblock {\em IEEE Control Systems Letters}, 4(2), 2020.

\bibitem{cetraro}
E.D. Sontag.
\newblock {\em Input to state stability: Basic concepts and results}, pages
  163--220.
\newblock Lecture Notes in Mathematics. Springer-Verlag, Berlin, 2008.

\bibitem{SONWANTAC}
E.D. Sontag and Y.~Wang.
\newblock New characterizations of {Input-to-State Stability}.
\newblock {\em IEEE Trans. Autom. Control}, 41:1283--1294, 1996.

\bibitem{SONWANIOS}
E.D. Sontag and Y.~Wang.
\newblock Notions of input-to-output stability.
\newblock {\em Systems {\rm \&} Control Letters}, 38:235--248, 1999.

\bibitem{SONWANIOS-LYA}
E.D. Sontag and Y.~Wang.
\newblock {Lyapunov} characterizations of input to output stability.
\newblock {\em SIAM J. on Contr. and Opt.}, 39:226--249, 2001.

\bibitem{TEEPRAconverse}
A.R. Teel and L.~Praly.
\newblock A smooth {Lyapunov} function from a class-{KL} estimate involving two
  positive semi-definite functions.
\newblock {\em ESAIM: COCV}, 5:313--367, 2000.

\bibitem{VORbook}
V.~I. Vorotnikov.
\newblock {\em Partial stability and control}.
\newblock Birkhauser, Boston, 1998.

\end{thebibliography}

\end{document}